\documentclass[reqno,10pt]{amsart}

\usepackage[english]{babel}
\usepackage[T1]{fontenc}

\usepackage{amsmath,amssymb,amsthm}
\usepackage{mathtools}  
\usepackage[margin=1.25in]{geometry}

\usepackage{graphics,tikz,caption,subcaption}

\usepackage{mathrsfs}
\usepackage{stmaryrd}

\usepackage[colorlinks	=	true, linkcolor	=	red, urlcolor	=	red, citecolor	=	red]{hyperref}
\usepackage{bookmark}									
\usepackage{cleveref}

\usepackage{multicol}
\usepackage{todonotes}
\usepackage{enumitem}
\usepackage{verbatim}

\theoremstyle{definition}
\newtheorem{thm}{Theorem}[section]

\newtheorem{defi}[thm]{Definition}
\newtheorem{lemm}[thm]{Lemma}

\newtheorem{prop}[thm]{Proposition}
\newtheorem{ques}[thm]{Question}

\newtheoremstyle{case}
{3pt}
{3pt}
{}
{}
{\itshape}
{:}
{.5em}
{}

\numberwithin{equation}{section}

\theoremstyle{case}

\theoremstyle{remark}

\DeclareMathOperator{\Mod}{mod}

\DeclareMathOperator{\diam}{diam}

\allowdisplaybreaks

\newcommand{\apmd}[2][]{															
	\ifthenelse{\equal{#1}{}}%
					{ \operatorname{N}_{#2}	}%
					{ \operatorname{N}_{#1}(#2) 	}}
					
\begin{document}

	\title{A doubling Loewner sphere that is not a quasisphere}
	\author{Matthew Romney}
	\address{Department of Mathematics\\ University of Hawaii at Manoa, Honolulu, HI 96822}
    \email{mromney@hawaii.edu}
	
	\date{}
    \thanks{The author is partially supported by the National Science Foundation under Grant No. DMS-2630316.}

    \subjclass[2020]{30L10, 53C23}
    \keywords{}
	\maketitle
	
	\begin{abstract}
        We construct a metric $2$-sphere of finite Hausdorff $2$-measure that is doubling, linearly locally connected and Loewner but cannot be mapped to the standard sphere by a quasisymmetric mapping. Our result gives a negative answer to a question of Ntalampekos.  
	\end{abstract}

    \section{Introduction} \label{sec:introduction}

    The classical uniformization theorem of Poincar\'e and Koebe asserts that every simply connected Riemann surface is conformally equivalent to either the disk, the plane or the $2$-sphere. In recent years, a major effort has been made to extend this theorem to metric spaces satisfying minimal geometric assumptions. The \textit{uniformization problem} asks one to determine when a metric space can be mapped to a canonical model space, such as the $2$-sphere, by a geometrically well-behaved homeomorphism. At this level of generality, the class of conformal mappings is too restrictive and one instead considers more flexible classes of mappings such as \textit{quasisymmetric mappings}. A \textit{($2$-dimensional) quasisphere} is a metric space that is quasisymmetrically equivalent to the standard $2$-sphere. The prototypical result of this type is the celebrated Bonk--Kleiner theorem \cite{BK:02}.  

    \begin{thm}[Bonk--Kleiner] \label{thm:bonkkleiner}
        Let $X$ be an Ahlfors $2$-regular metric sphere. Then $X$ is a quasisphere if and only if it is linearly locally connected. 
    \end{thm}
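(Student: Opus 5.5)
The forward implication is immediate, and I will settle it first. Linear local connectivity (LLC) is preserved under quasisymmetric maps, with quantitatively controlled constants. The standard sphere $\mathbb{S}^2$ is LLC. So if $f\colon X\to\mathbb{S}^2$ is quasisymmetric, then $X$ is LLC. The one thing to check is that the images of balls are comparable to balls. This follows from the $\eta$-control of $f$ and of $f^{-1}$, which is again quasisymmetric.

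For the converse I will follow the Bonk--Kleiner strategy: approximate $X$ by combinatorial triangulations, and pass to a limit of circle-packing maps.

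\textbf{Step 1: graphs.} For each scale $\delta=2^{-n}$, choose a maximal $\delta$-separated set $V_n\subset X$. Form the graph $G_n$ whose nodes are the points of $V_n$, with two nodes joined when their $\delta$-balls are close. Using the LLC property and the fact that $X$ is a topological sphere, I will modify $G_n$ to the $1$-skeleton of a triangulation $T_n$ of $\mathbb{S}^2$. Ahlfors $2$-regularity (hence doubling) gives a uniform bound on the vertex degrees of $T_n$.

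\textbf{Step 2: circle packings.} By the Koebe--Andreev--Thurston theorem, each $T_n$ is realised as the nerve of a circle packing $P_n$ on $\mathbb{S}^2$, normalised by Möbius transformations so that three fixed well-separated points of $X$ go to three fixed points. This gives maps $f_n\colon V_n\to\mathbb{S}^2$.

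\textbf{Step 3: uniform weak quasisymmetry.} The key estimate is that the $f_n$ are uniformly weakly quasisymmetric: $|x-a|\le|x-b|$ implies $|f_n(x)-f_n(a)|\le H\,|f_n(x)-f_n(b)|$, with $H$ independent of $n$. I will prove this by comparing combinatorial modulus in $T_n$ with conformal modulus in $\mathbb{S}^2$. On the packing side, the bounded degree gives the ring lemma, so combinatorial modulus is comparable to classical modulus. On the $X$ side, Ahlfors $2$-regularity together with LLC gives the lower bound: curve families joining two continua $E,F$ of relative distance $\Delta(E,F)\le t$ have combinatorial $2$-modulus bounded below by $\phi(t)>0$, uniformly in $n$. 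The upper bound for separated small sets comes from Ahlfors regularity via the test function $\rho\sim 1/(r\log(R/r))$ on annuli. If $f_n$ distorted some triple badly, these two bounds would contradict each other, passing through the comparison with modulus on the sphere.

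\textbf{Step 4: limit.} By Arzelà--Ascoli on the nets, extract a subsequential limit $f\colon X\to\mathbb{S}^2$ that is weakly $H$-quasisymmetric and injective; injectivity comes from the normalisation and the lower modulus bound. It is also surjective, by degree or topological arguments, since the packings fill $\mathbb{S}^2$. Finally, for homeomorphisms between doubling connected spaces, weak quasisymmetry upgrades to quasisymmetry, as in Heinonen's book. This uses only doubling and the connectedness of $X$ and $\mathbb{S}^2$.

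The main obstacle is Step 3, specifically the uniform lower bound on combinatorial modulus in $X$. Turning LLC plus $2$-regularity into a Loewner-type estimate at the discrete level requires a careful argument: one builds many disjoint chains via LLC at every scale and sums them. The combinatorial work in Step 1, producing honest triangulations, is the other technically delicate part.
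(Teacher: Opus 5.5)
The paper does not prove this theorem. It quotes it from \cite{BK:02} as background; the paper's own point is that it has no analogue for finite-area spheres, even assuming doubling and Loewner. The right comparison is therefore with Bonk--Kleiner's argument, and your outline follows its architecture: bounded-degree triangulations at scale $\delta$, Koebe--Andreev--Thurston packings, uniform distortion control from combinatorial modulus, and a limit map. The forward direction is fine, and so is the end game (injectivity from the normalization, surjectivity by invariance of domain, upgrading via Heinonen's theorem on weakly quasisymmetric maps).

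The genuine gap is in Step~3, where the content of the theorem lives: the uniform lower bound on the combinatorial $2$-modulus of chains joining $E$ and $F$ in $X$. Your proposed mechanism, ``many disjoint chains via LLC at every scale,'' does not deliver it. LLC produces connecting continua, but not disjoint ones, not a prescribed number of them, and with no bound on how many $\delta$-sets each one meets. To bound $\sum_v\rho(v)^2$ below for every admissible $\rho$ this way, you need vertex-disjoint chains of lengths $L_i$ with $\sum_i 1/L_i\gtrsim 1$. Your mechanism also never uses the planar topology of $X$, which is the indispensable ingredient at this step.

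The missing idea is planar separation combined with duality. Trim $E,F$ to diameter $\simeq r$ and restrict the graph to a ball $B$ of radius $C(t)\,r$.
\begin{itemize}
\item If a vertex set $S$ separates $E$ from $F$ there, then $\bigcup_{v\in S}p(v)\cup(X\setminus B)$ separates them in the sphere. By unicoherence it contains a separating continuum $K$.
\item Either $K$ crosses the annulus out to $\partial B$, or LLC(2) forces $\diam K\gtrsim r/\lambda$. Otherwise, points of $E$ and $F$ far from $K$ could be joined outside a ball containing $K$. In both cases $|S|\gtrsim r/\delta$.
\item Menger's theorem then gives $N\gtrsim r/\delta$ vertex-disjoint chains inside $B$.
\item Ahlfors $2$-regularity gives $\sum_i L_i\le \#(V_n\cap B)\lesssim (r/\delta)^2$.
\item Cauchy--Schwarz gives $\sum_v\rho(v)^2\ge\sum_i 1/L_i\ge N^2/\sum_i L_i\gtrsim 1$.
\end{itemize}
Equivalently, you can run a coarea argument on the sublevel sets of the $\rho$-distance from $E$. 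The cut size $r/\delta$ is exactly what the $(r/\delta)^2$ vertex count can absorb, and it is available only because separating sets in a sphere contain continua.

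Two smaller repairs are also needed. First, in the distortion argument, choose the continua in $\mathbb{S}^2$ around $f_n(x),f_n(b)$ and $f_n(a)$, then pull the corresponding connected vertex sets back to $X$. Nothing controls where $f_n$ sends a continuum of $X$. Second, Arzel\`a--Ascoli needs equicontinuity, which uniform weak quasisymmetry with a three-point normalization does not give by itself. Either perform the weak-to-strong upgrade on the nets, at scales above the mesh, before passing to the limit, or control cross-ratios instead.
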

    

    This paper concerns the following natural question.

    \begin{ques} \label{ques:generalization}
        Does there exist an intrinsic characterization of quasispheres similar to \Cref{thm:bonkkleiner} under weaker assumptions than Ahlfors $2$-regularity?
    \end{ques}

    We consider here the assumption of finite area (i.e., Hausdorff $2$-measure). This assumption is motivated by recent advances in the field showing its viability in the context of uniformization problems. Specifically, it was shown by the author and Ntalampekos in \cite{NR:21,NR:22}, and independently by Meier--Wenger \cite{MW:21} for length metrics, that any metric sphere of finite area admits a so-called \textit{weakly quasiconformal} parametrization from the standard sphere. Thus, a positive answer to \Cref{ques:generalization} could be obtained by showing that further geometric assumptions on the space allow one to ``upgrade'' a weakly quasiconformal parametrization to a quasisymmetric one. As candidate assumptions, there are three standard conditions that are necessary for a metric sphere of finite area to be a quasisphere: (1) the doubling property, (2) linear local connectedness and (3) the Loewner property. These are reviewed in \Cref{sec:background} below. 
    
    However, we show that these conditions are still insufficient, even when taken together. Thus the Bonk--Kleiner theorem does not have an analogue at this level of generality. We prove the following. 

    \begin{thm} \label{thm:main}
        A metric sphere $X$ of finite Hausdorff $2$-measure exists that is doubling, linearly locally connected and Loewner but not a quasisphere.
    \end{thm}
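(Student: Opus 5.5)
The plan is to build $X$ as the standard sphere $\mathbb{S}^2$ with a conformally deformed metric, $d_\rho(x,y)=\inf_\gamma\int_\gamma \rho\,ds$, for a carefully chosen weight $\rho$. The weight should be degenerate along a Cantor-type set $E$ (or along a countable family of disjoint small disks accumulating at a point) so that areas are badly distributed. Finite Hausdorff $2$-measure then comes essentially for free, since $\mathcal{H}^2_{d_\rho}\lesssim\int\rho^2$ as long as $\rho\in L^2$.

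The guiding principle is that a quasisymmetric map from $\mathbb{S}^2$ onto a space of finite area is quasiconformal with an $L^2$-type Jacobian compatible with the measure. By the theory of Gehring and David--Semmes, in the quasisphere case the measure $\mu=\mathcal{H}^2_X$ would then have to satisfy a reverse H\"older / $A_\infty$-type comparison with the measure of balls. I therefore choose $\rho$ so that $\rho^2$ is \emph{not} comparable to any metric doubling measure, for example a weight that is huge on a sparse sequence of annuli and tiny elsewhere. The goal is that $\mathcal{H}^2(B)$ fails to be comparable to $\diam(B)^2$ in a way no quasisymmetry can correct. At the same time $\rho$ should be locally comparable to a strong $A_\infty$ weight at each scale separately, which is what makes doubling and LLC hold.

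The key steps, in order, are as follows.
\begin{enumerate}
\item Define $\rho$ by an iterative construction: at stage $n$ place a small disk $D_n$, with radii and weights $\lambda_n$ chosen so that $\sum\lambda_n^2 r_n^2<\infty$.
\item Verify that $d_\rho$ is doubling and LLC. Each $D_n$ is rescaled by a constant factor, hence is bi-Lipschitz to a round disk, so these properties reduce to a scale-by-scale estimate.
\item Verify the Loewner property. I would do this through a $(1,1)$- or $(1,2)$-Poincar\'e inequality for $\mathcal{H}^2$, or directly by producing curve families joining continua with modulus bounded below. This works because curves may avoid or traverse the deformed regions at bounded cost.
\item Show that $X$ is not a quasisphere. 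Suppose $f\colon\mathbb{S}^2\to X$ is quasisymmetric. Then $f$ preserves the conformal modulus of annuli up to constants. Compare the modulus of the curve family separating $D_n$ from a fixed set, computed in $X$ using $\mathcal{H}^2$ and the admissible function $\rho^{-1}$ pulled appropriately, with the modulus in $\mathbb{S}^2$. Obtain a contradiction as $n\to\infty$ because the ratios $\lambda_n$ are unbounded.
\end{enumerate}

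The main obstacle is the tension between steps 3 and 4. The Loewner property, together with Ahlfors regularity, would force quasisymmetric uniformization by \Cref{thm:bonkkleiner}, so the failure must come entirely from non-Ahlfors-regularity of $\mathcal{H}^2$. But Loewner spaces typically need the measure to be $2$-regular, so the measure must be distorted in a way invisible to modulus lower bounds. I would first try defining the Loewner condition with $\mathcal{H}^2$ and making $\rho$ large only on tiny sets: lower modulus bounds then survive, while upper bounds fail, and that failure is what breaks quasisymmetry.
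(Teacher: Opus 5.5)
\textbf{There is a genuine gap: your approach cannot produce the theorem, however $\rho$ is tuned.}

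Take a weight that is locally bounded above and below, such as your piecewise constant $\rho$ equal to $\lambda_n$ on $D_n$. Lengths transform by $\rho\,ds$ and area by $\rho^2\,d\mathcal{L}^2$, so the identity map $\mathbb{S}^2\to(\mathbb{S}^2,d_\rho)$ is essentially modulus preserving. Hence $X$ is reciprocal in Rajala's sense. As the introduction notes (via Ntalampekos), for reciprocal spheres doubling plus Loewner already forces $X$ to be a quasisphere. So if your steps 2 and 3 succeeded, step 4 would have to fail; any example must be non-reciprocal.

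Step 4 is also flawed on its own terms:
\begin{itemize}
\item Comparing moduli in $X$ and in $\mathbb{S}^2$ cannot give a contradiction when the identity preserves them exactly.
\item A quasisymmetric map need not preserve modulus outside the Ahlfors regular Loewner setting.
\item The guiding heuristic is misdirected. Being a quasisphere is a purely metric property, and the area of a finite-area quasisymmetric image need not satisfy any $A_\infty$-type comparison. For example, $([0,1]^2,d_\beta)$ from \Cref{thm:qs} is a quasisymmetric image of the square, yet $\mathcal{H}^2(\iota(S))=0$ while $\mathcal{L}^2(S)>1-\beta^2$.
\end{itemize}

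\textbf{The concrete construction also fails at step 2.} Suppose $\sup_n\max\{\lambda_n,\lambda_n^{-1}\}=\infty$. The jump of $\rho$ across $\partial D_n$ creates a collar where paths just outside $D_n$ are cheap compared with the stretched inside, or the reverse when $\lambda_n\to0$. For suitable $s$, a ball of radius $s$ centered on $\partial D_n$ then needs on the order of $\max\{\lambda_n,\lambda_n^{-1}\}$ balls of radius $s/2$ to cover it, so $X$ is not doubling. Checking that each piece is bi-Lipschitz to a round disk ignores exactly this gluing. If instead the $\lambda_n$ are bounded, $d_\rho$ is bi-Lipschitz to the round metric and $X$ is trivially a quasisphere. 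Smoothing the jumps does not help, because the reciprocity obstruction remains.

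\textbf{What is missing is a metric, not measure-theoretic, source of non-uniformizability.} The paper proceeds as follows.
\begin{itemize}
\item It starts from a sphere $Z$ that is already doubling, linearly locally connected and not a quasisphere. This is the pillowcase sphere over the dyadic slit carpet, which contains rescaled slit carpets that do not embed quasisymmetrically in the plane.
\item It then applies a quasisymmetric change of metric, so those three properties persist automatically.
\item That change collapses fat Cantor sets inside the pillowcase faces via \Cref{thm:qs}. This collapse is the non-reciprocal ingredient; in $\widetilde{X}$ it appears as the weight $\chi_{A\setminus S(A)}$, which vanishes on a set of positive measure.
\item The rescaling parameters $\delta_n$ keep the area finite.
\item The Loewner property is proved by chaining boundedly many rectangular path families of controlled $\widetilde{X}$-length.
\end{itemize}
Your opening idea of a weight degenerating on a Cantor set points in this direction. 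Applied as a quasisymmetric deformation of $\mathbb{S}^2$ itself, however, it would only produce another quasisphere.
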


    This theorem provides a negative answer to a question of Ntalampekos asked in \cite[Question 1.6]{Nta:25a} and again in \cite[Question 4.7]{Nta:25b}. Note that, as observed in Thereom 1.1 of \cite{Nta:25a}, for smooth Riemannian $2$-spheres the doubling and Loewner properties are quantitatively equivalent to being a quasisphere. The same conclusion is also true for metric spheres that are \textit{reciprocal} as defined by Rajala in \cite{Raj:17}; such a space has controlled conformal modulus. Thus, any space satisfying \Cref{thm:main} must be non-reciprocal. 

    The idea of the construction is conceptually simple. There are existing examples in the literature of metric spheres that are doubling and linearly locally connected but do not satisfy the Loewner property and hence are not quasispheres. We start with such a metric sphere, denoted by $Y$; the particular example is not so essential. Here, we use a sphere constructed from dyadic slit carpets, following work of Merenkov--Wildrick \cite{MW:13} and  Hakobyan--Li \cite{HL:23}. For another source of constructions of both quasispheres and non-quasispheres, see the monograph of Bonk--Meyer \cite{BM:17}.

    Next, we modify the sphere to impose the Loewner property on it. This can be done by ``collapsing'' the metric of $Y$ on a fat Cantor set. Done naively, such a change of metric is not quasisymmetric. However, the insight of the author's results in \cite{Rom:19b} is that this collapsing can be done via a quasisymmetric deformation of the original metric. In fact, the area of the complement of the collapsed Cantor set can be made arbitrarily small yet with the change of metric having a uniform quasisymmetric control function. In this way we obtain a metric sphere that is also Loewner but not a quasisphere. The main step of our proof is a careful verification that the new metric is indeed Loewner. We note that the quasisymmetric collapsing of the metric necessarily increases the Hausdorff $2$-measure of the space, and so some care is needed in order to retain a sphere of finite area. 

    This paper is organized as follows. \Cref{sec:background} contains the necessary definitions and background. \Cref{sec:pillowcase} explains the basic pillowcase construction. In \Cref{sec:stage2}, we show how to deform this pillowcase quasisymmetrically to a Loewner space and complete the proof of \Cref{thm:main}.

    \subsection*{Acknowledgments} I thank Dimitrios Ntalampekos and Kai Rajala for their comments on a draft of this paper.

    \section{Background} \label{sec:background}

    We next review the relevant notation and definitions. In the following, $(X,d_X)$ and $(Y,d_Y)$ are metric spaces. The Euclidean metric on $\mathbb{R}^2$ is denoted by $d_{\text{Euc}}$. The open metric ball of radius $r > 0$ centered at $x \in X$ is denoted by $B_X(x,r)$, or simply $B(x,r)$. We also use $d_X$ to denote the distance between subsets: for two sets $E,F \subset X$, $d_X(E,F) = \inf\{d_X(x,y): x \in X, y \in Y\}$. The diameter of a set $E \subset X$ is $\diam_X(E) = \sup\{d_X(x,y): x,y \in X\}$.   We say that a \textit{metric sphere} is a metric space homeomorphic to the $2$-sphere.   

    \begin{defi}
        A homeomorphism $f \colon X \to Y$ is \textit{quasisymmetric} if there is a homeomorphism $\eta \colon [0,\infty) \to [0,\infty)$ such that 
    \[ \frac{d_Y(f(x),f(y))}{d_Y(f(x),f(z))} \leq \eta\left( \frac{d_X(x,y)}{d_X(x,z)} \right)\]
    for all triples of distinct points $x,y,z \in X$. The function $\eta$ is called the \textit{control function}. 
    \end{defi}

    A quasisymmetric mapping preserves relative distance between triples of points in $X$, up to the error allowed by the control function $\eta$. It is a standard exercise to show that the inverse of a quasisymmetric mapping is also quasisymmetric, as is the composition of two quasisymmetric mappings. Conformal mappings in Euclidean space satisfy the quasisymmetry property locally, and hence we may regard quasisymmetric mappings as a natural generalization to metric spaces. For the basic theory, we refer the reader to \cite[Chapters 10-11]{Hei:01}.
    
    Two metric spaces that can be mapped onto each other by a quasisymmetric homeomorphism are said to be \textit{quasisymmetrically equivalent}. A metric sphere that is quasisymmetrically equivalent to the $2$-sphere is called a \textit{quasisphere}. 

    We denote by $\mathcal{H}_X^p$, or simply $\mathcal{H}^p$, the $p$-dimensional Hausdorff measure on $X$.
    Given $p \geq 0$, the metric space $X$ is \textit{Ahlfors $p$-regular} if there exists $C \geq 1$ such that \[C^{-1} r^p \leq \mathcal{H}^p(B(x,r)) \leq Cr^p\] for all $x \in X$ and $r \in (0, \text{diam}(X))$.   


    We now define in turn the three terms appearing in \Cref{thm:main}.

    \begin{defi} 
    The metric space $X$ is \textit{doubling} if there exists $C \geq 1$ such that every open ball $B(x,r)$ in $X$ can be covered by $C$ open balls of radius $r/2$. 
    \end{defi}

    The doubling condition can be thought of as a metric notion of finite dimensionality. For example, it is easy to show that any Ahlfors $p$-regular space is doubling. 

    \begin{defi} \label{def:llc}
    The metric space $X$ is \textit{linearly locally connected} if there exists $\lambda \geq 1$ such that the following two properties hold.
    \begin{enumerate}
        \item For every ball $B(x,r) \subset X$ and points $y,z \in B(x,r)$, there exists a continuum in $B(x,\lambda r)$ connecting $y$ and $z$.
        \item For all ball $B(x,r) \subset X$ and $y,z \in X \setminus B(x,r)$, there exists a continuum in $X \setminus B(x,r/\lambda)$ connecting $y$ and $z$.
    \end{enumerate}  
    \end{defi}
    We note that if $X$ is a topological $2$-manifold, then linear local connectedness is equivalent to the property of \textit{linear local contractibility} (\cite[Lemma 2.5]{BK:02}). Linear local connectedness rules out the existence of cusps and similar features. 

    It is easy to check that both the doubling property and linear local connectedness are quasisymmetrically invariant, though the specific constants may change quantitatively. These properties are satisfied by the standard $2$-sphere and hence by any quasisphere. 

    Next, the statement of the Loewner property involves the notion of modulus of a path family, which measures the size of a path family from the point of view of quasiconformal geometry. A \textit{path} is a continuous function from an interval into $X$. We denote the image of a path $\gamma$ by $|\gamma|$ and its length by $\ell(\gamma)$, or $\ell(|\gamma|)$ if $\gamma$ is injective. Abusing notation somewhat, we often identify a path with its image in $X$. Let $\Gamma$ be a family of paths in $X$. A Borel function $\rho \colon X \to [0,\infty]$ is called \textit{admissible} for $\Gamma$ if every locally rectifiable path $\gamma \in \Gamma$ satisfies \[\int_\gamma \rho\,ds \geq 1.\]  
    The \textit{$2$-modulus}, or \textit{(conformal) modulus}, of $\Gamma$ is 
    \[\Mod \Gamma = \inf_\rho \int_X \rho^2\,d\mathcal{H}^2, \]
    where the infimum is taken over all admissible functions $\rho$. If $E,F$ are disjoint continua in $X$, we let $\Gamma(E,F)$ denote the family of all paths connecting $E$ and $F$. 


    \begin{defi}
        The metric space $X$ is \textit{Loewner} if there is a decreasing function $\varphi \colon (0,\infty) \to (0,\infty)$ such that $\Mod \Gamma(E,F) \geq \varphi(t)$ for all disjoint nondegenerate continua $E,F \subset X$ satisfying $t \leq \triangle(E,F)$.
    \end{defi}
    Here, $\triangle(E,F)$ is the relative distance
    \[\triangle(E,F) =  \frac{d_X(E,F)}{\min\{\diam(E),\diam(F)\}}.\]

    The Loewner property stipulates the existence of large families of rectifiable paths connecting disjoint continua in the space, quantitatively. It was first introduced by Heinonen--Koskela \cite{HK:98}. It is not quasisymmetrically invariant in general. For example, the Euclidean metric on $\mathbb{R}^n$ is quasisymmetrically equivalent to the \textit{snowflake metric} $d_{\text{Euc}}^\alpha$ for all $\alpha \in (0,1)$, although the latter has no nonconstant rectifiable paths and hence is not Loewner. However, it has been shown in Theorem 1.3 of \cite{Nta:25a} that any metric space of finite Hausdorff $2$-measure that is quasisymmetrically equivalent to a compact Riemannian surface must be Loewner. This fact is not obvious but relies on the uniformization results in \cite{NR:22}.


    \section{A modified dyadic slit pillowcase sphere} \label{sec:pillowcase}

    Our construction to prove \Cref{thm:main} comprises two stages. First is the construction of a metric sphere $Z$ of finite area that is doubling and linearly locally connected but not quasisymmetrically equivalent to the $2$-sphere. The construction is a \textit{pillowcase sphere} built from a \textit{dyadic slit carpet} following the main construction of Hakobyan--Li in \cite{HL:23} (using the constant sequence $\mathbf{r} = (1/2)$), though with a modification to keep the Hausdorff $2$-measure finite. See \Cref{fig:carpet} for an illustration. We refer the reader to \cite[Section 5]{HL:23} for full details of the original construction. The underlying dyadic slit carpet appeared previously in work of Merenkov--Wildrick \cite[Theorem 1.5]{MW:13} and Merenkov \cite{Mer:10}. The space $Z$ is not Loewner. In the next subsection, we will modify $Z$ quasisymmetrically to impose the Loewner condition. 

    \subsection{The underlying slit carpet}

    Given $n \in \mathbb{Z}_{\geq 0}$ and $1 \leq i,j \leq 2^n$, we define the 
    vertical slit 
    \[I_{i,j}^n = \{2^{-n}(i- 1/2)\} \times [2^{-n}(j-3/4),2^{-n}(j-1/4)].\]

    Let $W_0 = (0,1)^2 \setminus I_{1,1}^0$. Given $W_{n-1}$, we define inductively the domain
    \[W_n = W_{n-1} \setminus \bigcup_{1 \leq i,j \leq 2^n} I_{i,j}^n. \]
    We equip $W_n$ with the length metric induced by the Euclidean metric. Let $\overline{W}_n$ denote the metric completion of $W_n$. The spaces $\overline{W}_n$ converge in the Gromov--Hausdorff sense to a space $W$ called the \textit{dyadic slit carpet} generated by the sequence $\mathbf{r} = (1/2)$. By a theorem of Merenkov--Wildrick \cite{MW:13} (see also Hakobyan--Li \cite[Theorem 1.6]{HL:23}), the carpet $W$ is not quasisymmetrically equivalent to a subset of the Euclidean plane.

    Observe that each slit $I_{i,j}^n$ corresponds to a circle in the completion $\overline{W}_n$, in which each interior point of $I_{i,j}^n$ represents two points of the circle. We let $L_{i,j}^n$ denote the left arc and $R_{i,j}^n$ the right arc of this circle. 

    \begin{figure} 
    \centering
        \begin{tikzpicture}[scale=.7]
        \draw[xstep=2,ystep=2.0,red,dashed, thin] (0,0) grid (8,8);
        \draw[very thick] (0,0) to (8,0) to (8,8) to (0,8) to (0,0);
        \draw[very thick] (4,2) to (4,6);
        \draw[very thick] (2,1) to (2,3);
        \draw[very thick] (6,1) to (6,3);
        \draw[very thick] (2,5) to (2,7);
        \draw[very thick] (6,5) to (6,7);
        \draw[very thick] (1,.5) to (1,1.5);
        \draw[very thick] (3,.5) to (3,1.5);
        \draw[very thick] (5,.5) to (5,1.5);
        \draw[very thick] (7,.5) to (7,1.5);
        \draw[very thick] (1,2.5) to (1,3.5);
        \draw[very thick] (3,2.5) to (3,3.5);
        \draw[very thick] (5,2.5) to (5,3.5);
        \draw[very thick] (7,2.5) to (7,3.5);
        \draw[very thick] (1,4.5) to (1,5.5);
        \draw[very thick] (3,4.5) to (3,5.5);
        \draw[very thick] (5,4.5) to (5,5.5);
        \draw[very thick] (7,4.5) to (7,5.5);
        \draw[very thick] (1,6.5) to (1,7.5);
        \draw[very thick] (3,6.5) to (3,7.5);
        \draw[very thick] (5,6.5) to (5,7.5);
        \draw[very thick] (7,6.5) to (7,7.5);
    \end{tikzpicture}
        \caption{Three stages of the basic dyadic slit carpet construction.} \label{fig:carpet}
    \label{fig:qs}
    \end{figure}

    \subsection{Adding pillowcases}

     We next add pillowcases to each space $\overline{W}_n$ to form a topological disk. Namely, to each slit $I$ we glue in a matching pillowcase according to the following procedure. Let $a>0$ denote the length of $I$. A \textit{pillowcase} is the quotient metric space formed by taking two copies $A^+, A^-$ of the square $[0,a]^2$ and gluing them together along three common sides. To be more explicit, for each choice of $\pm$, let $\varphi_\pm^1, \ldots, \varphi_\pm^4 \colon [0,a] \to A^\pm$ denote the standard arc-length parametrizations of the four sides of $A^\pm$, where the sides are parametrized cyclically (that is, $\varphi_\pm^j(a) = \varphi_\pm^{j+1}(0)$ for all $j = 1,\ldots, 4$ with the identification $\varphi_\pm^5 = \varphi_\pm^1$). Let $L,R$ denote the parametrization of the left and right arcs, respectively, of the corresponding slit $I$, from bottom to top. For each $j = 2,3,4$, we identify each pair of points $\varphi_+^j(t)$ and $\varphi_-^j(t)$. We also identify each pair of points $\varphi_-^1(t)$ and $L(t)$ and each pair of points $\varphi_+^1(t)$ and $R(t)$. Note that $A^-$ becomes the left side of the pillowcase while $A^+$ becomes the right side. 
    
    Let $Y_n$ be the length metric space formed by gluing in a pillowcase of matching side length to each slit in $\overline{W}_n$. We observe that $Y_n$ is a closed topological disk. We further note that there are $4^n$ slits in the $n$-th generation, and each pillowcase of this generation has area $2\cdot(2^{-n-1})^2 = 2^{-2n-1}$. Together, the $n$-th generation pillowcases contribute area $\frac{1}{2}$. From this we deduce that $\mathcal{H}^2(Y_n) = \mathcal{H}^2(Y_{n-1}) + \frac{1}{2} = 1 + \frac{n}{2}$. Note that these surfaces do not have uniformly bounded area, and so we require a modification to keep the area bounded. 

    \subsection{Gluing together rescaled copies of $Y_n$} \label{sec:rescaling}
    
    We choose a decreasing sequence $(\delta_n)$ that converges to $0$ sufficiently rapidly. Specifically, we choose $\delta_n \leq  K(8^{-n})^{-1}2^{-n}$, where $K = K(\beta)$ is the constant in \Cref{thm:qs} below, depending on a choice of parameter $\beta$. The value $\delta_n$ is also chosen to be a power of $1/2$ and to satisfy $\delta_n \leq 2^{-n-2}\delta_{n-1}$. 
    
    Let $Z_1 = Y_1$. Define $Z_n$ inductively by removing the square $[0,\delta_n]^2$ from $Z_{n-1}$ and replacing it with a copy of $Y_n$ rescaled by the factor $\delta_n$. Equip $Z_n$ with the induced length metric. Note that $\delta_n$ is sufficiently small so that the image of $Y_n$ in $Z_n$ in this procedure avoids any slits in $Z_{n-1}$. 
    Finally, let $Z^+$ be the Gromov--Hausdorff limit of $Z_n$ and $Z^-$ be the unit square $[0,1]^2$, which we glue together along the boundary in the standard way to form a sphere $Z$. 
    
    Let $T_n$ denote the square $[0,\delta_n]^2$ as a subset of $Z$ (including the enclosed pillowcases) and let $\psi_n$ be the similarity map from $Y_n \setminus [0,2^{-n-2}]^2$ its rescaled copy in $Z \setminus T_{n+1}$. 
    We observe that
    \[\mathcal{H}^2(T_n \setminus T_{n+1}) \leq \delta_n^2 \mathcal{H}^2(Y_n) = \delta_n^2\left(1 + \frac{n}{2} \right). \] 

    \begin{prop}
        The space $Z$ is not quasisymmetrically equivalent to the standard sphere $\mathbb{S}^2$.
    \end{prop}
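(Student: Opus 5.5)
Suppose toward a contradiction that $f\colon Z \to \mathbb{S}^2$ is quasisymmetric with control function $\eta$. The plan is to produce, inside $Z$, a subset that is quasisymmetrically the dyadic slit carpet $W$, and then use the obstruction quoted above: by Merenkov--Wildrick \cite{MW:13} (see also \cite[Theorem 1.6]{HL:23}), $W$ is not quasisymmetrically equivalent to any subset of the plane.

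Since $Z$ is built from rescaled truncated copies of the $Y_n$ rather than a single limit carpet, I would instead use a compactness argument along the pieces $\psi_n(Y_n\setminus[0,2^{-n-2}]^2)$. In the $n$-th piece, consider the rescaled copy of $\overline{W}_n$ with the pillowcases omitted, and truncate away the corner square $[0,2^{-n-2}]^2$. The metric on this set comes from the length metric of $Z$. Because the removed square and the later stages sit in a tiny corner, and pillowcases only give alternative routes of length comparable to the slit, I expect this metric to be bi-Lipschitz (with uniform constant) to the length metric of $\overline{W}_n$ away from a small corner. Verifying this comparison is routine but the constants must be uniform in $n$.

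Next, rescale by $\delta_n^{-1}$ and compose with $f$, then postnormalize by a Möbius map so that three fixed far-apart points of the unit square go to fixed points of $\mathbb{S}^2$. This gives maps $g_n$ from these subsets of $\overline{W}_n$ into $\mathbb{S}^2$. They are uniformly $\eta'$-quasisymmetric, hence equicontinuous and with equicontinuous inverses, using the three-point normalization as in \cite[Ch.~10--11]{Hei:01}. The spaces $\overline{W}_n$ converge in Gromov--Hausdorff sense to $W$, and the truncated corner shrinks to a point. Using an Arzelà--Ascoli argument along Gromov--Hausdorff approximations, I would extract a limit map $g\colon W \to \mathbb{S}^2$ that is $\eta'$-quasisymmetric onto its image. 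Its image misses at least one point, for example the image of the bottom face $Z^-$, by equicontinuity. So $W$ embeds quasisymmetrically into the plane, a contradiction.

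The main obstacle is making the limiting step rigorous. Two things are needed: the uniform bi-Lipschitz comparison between the truncated slit domains inside $Z$ and $\overline{W}_n$, and the fact that the limit is injective, i.e.\ that quasisymmetry passes to Gromov--Hausdorff limits. For injectivity I would use the quasisymmetric lower bound $d(g_n x, g_n y)\ge \eta'(d(x,z)/d(x,y))^{-1} d(g_n x,g_n z)$ together with the normalization, which prevents collapse. Alternatively, one could cite directly the Hakobyan--Li argument that the pillowcase sphere over $\overline{W}$ is not a quasisphere, which is phrased precisely through such uniform finite-stage estimates.
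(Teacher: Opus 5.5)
Your proposal is correct and is essentially the paper's argument. You restrict $f$ to the rescaled pieces near the corner, normalize, use compactness of normalized uniformly quasisymmetric maps to obtain a quasisymmetric embedding of the slit carpet $W$ into the plane, and contradict Merenkov--Wildrick/Hakobyan--Li. You are more careful than the paper about passing to the carpet part, and the metric comparison you flag does hold, since pillowcases give no shortcuts between points of $\overline{W}_n$.

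The one step to phrase more carefully is the M\"obius post-normalization, because M\"obius maps are not uniformly quasisymmetric. Either invoke the quasi-M\"obius three-point argument, or, as the paper implicitly does, project stereographically from a point far from the shrinking set $f(T_n)$ and normalize by similarities.
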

    \begin{proof}
        Suppose there is a quasisymmetric mapping $f \colon Z \to \mathbb{S}^2$. Then for all $n \in \mathbb{N}$,
        we can restrict $f$ to the subset $[0,\delta_n]^2$ and rescale to obtain a quasisymmetric embedding $f_n \colon Z_n \to \mathbb{R}^2$ with uniform control function. We may further normalize so that $f_n(0,0) = (0,0)$, $f_n(1,0) = (1,0)$, $f_n(0,1) = (0,1)$, again with uniform control function. It is known (cf. \cite[Corollary 10.30]{Hei:01}) that the family $f_n$ is precompact and so has a subsequence converging to a quasisymmetric embedding from the Gromov--Hausdorff limit of $Z_n$. This restricts to a quasisymmetric embedding of the dyadic slit carpet $X$ into $\mathbb{R}^2$. As previously noted, this contradicts Theorem 1.6 of \cite{HL:23}.  
    \end{proof}

    \begin{prop}
        The space $Z$ is doubling and linearly locally connected.
    \end{prop}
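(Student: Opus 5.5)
The plan is to reduce both properties to uniform estimates on the building blocks $Y_n$, and then transfer these across the rescaled gluing in $Z$. The key input is that each $Y_n$ (and each $\overline{W}_n$ with pillowcases) is doubling and linearly locally connected with constants independent of $n$. This is essentially the content of the Hakobyan--Li analysis of the pillowcase sphere: their limit space is doubling and LLC, and the arguments there are made at finite stages with uniform constants. The structure is self-similar: a pillowcase of generation $k$ sitting in a dyadic square $Q$ of side $2^{-k}$ is, up to the scaling factor $2^{-k}$, isometric to the first-generation configuration. For the same reason, each unit square with its pillowcases, seen in its own length metric, is a bi-Lipschitz (with uniform constant) copy of the standard unit square near its boundary.

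I would first record a local comparison. Take a ball $B_Z(x,r)$. Let $n$ be the largest index with $x \in T_n$. Compare $r$ with $\delta_n$ and $\delta_{n+1}$.

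\emph{Case $x$ far from $T_{n+1}$.} Suppose $d(x, T_{n+1}) \ge r$, or more generally $r$ is comparable to $\delta_n$ or smaller. Then the ball lies, up to a bounded enlargement, in $\psi_n(Y_n)$ together with its neighbor layers. The doubling and LLC estimates there follow from those of $Y_n$ via the similarity $\psi_n$.

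\emph{Case $r \gg \delta_{n+1}$.} The inner copy $T_{n+1}$ has diameter about $\delta_{n+1}$. Viewed from scale $r$, it is a small square region whose boundary is a Euclidean square. So $B_Z(x,r)$ looks like a ball in $\psi_n(Y_n)$ in which a tiny square has been replaced by another disk with the same square boundary.

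For doubling, I would cover the part inside $T_{n+1}$ by one ball when $r \ge 4\diam T_{n+1}$, and otherwise use the doubling of $\psi_{n+1}(Y_{n+1})$. For balls meeting $Z^-$, I would use that $Z^-$ is a Euclidean square glued along its boundary. The boundary of $Z^+$ is also a Euclidean square, because the slits and the $T_n$ stay away from it, which gives bi-Lipschitz collars. The condition $\delta_n \le 2^{-n-2}\delta_{n-1}$ guarantees that the nested squares are well separated. Hence any ball meets at most two consecutive levels in a nontrivial way, which keeps the counting constant bounded.

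For LLC, I would use the equivalent linear local contractibility for topological 2-manifolds mentioned after \Cref{def:llc}, and check the two LLC conditions directly.

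\emph{Condition (1).} Given $y,z \in B(x,r)$, take a geodesic in $Z$. The metric on each $Z_n$ is a length metric, so $Z$ is geodesic as a Gromov--Hausdorff limit of uniformly compact length spaces. A geodesic from $y$ to $z$ stays in $B(x,3r)$.

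\emph{Condition (2).} This is the real content. I need to connect $y,z$ outside $B(x,r)$ while avoiding $B(x,r/\lambda)$. The method is to route around the boundary of a dyadic square, or of a square $\partial T_m$, of size comparable to $r$ surrounding $x$. This works because the boundary of any dyadic square not cut by a slit is a Euclidean-like loop: slits of generation $k$ lie in the interior of the generation-$k$ squares at distance $2^{-k-2}$ from their boundaries. Pillowcase interiors have to be handled separately, using the three glued sides of a pillowcase to go around.

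The main obstacle is this second LLC condition near points of slits, pillowcases and nested squares simultaneously. Concretely, one must verify uniformly in $n$ that short loops separating $B(x,r/\lambda)$ from the complement of $B(x,r)$ exist in every configuration. I would first try to prove it for $Y_n$ by self-similarity, reducing to finitely many model configurations at scale $2^{-k}$, and then transfer to $Z$ using the separation provided by the choice of $\delta_n$.
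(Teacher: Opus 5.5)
Granting the uniform-in-$n$ estimates for the $Y_n$, your multiscale bookkeeping for doubling is fine, and your geodesic argument for condition (1) of \Cref{def:llc} is correct. The overall framework (uniform estimates for $Y_n$, then the nested gluing) is also reasonable. But the proposal never establishes condition (2), which is the real content of the statement. You first take uniform linear local connectedness of the $Y_n$ as a ``key input'' attributed to \cite{HL:23}, and at the end concede that condition (2) for $Y_n$ still has to be proved; neither is carried out. The paper supplies this step by citation. \cite{HL:23} (Lemma 8.4) gives linear local connectedness only for the slit carpet $W$, and the passage to the surface with pillowcases glued in comes from a separate gluing result, Theorem 2.6.2 of \cite{Hai:15}. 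You invoke no such transfer result and give no working substitute.

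The mechanism you sketch, routing around boundaries of dyadic squares ``not cut by a slit'', fails where it is needed. First, the premise is off. A generation-$m$ slit lies on the line $\pi_1=(2i-1)2^{-m-1}$, which is a grid line of every dyadic grid of generation $p>m$; for example, $I^0_{1,1}$ contains the entire right edge of $[1/4,1/2]^2$. So near a large slit, all small squares are cut.

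More fundamentally, take a point $x$ in the carpet at distance $d$ to the left of the midpoint of a slit $\{s\}\times[t_1,t_2]$ whose pillowcase $A$ has side $a=t_2-t_1$, and take $\lambda d<r\ll a$. Then $B(x,r/\lambda)$ contains interior points of the face $A^-$ near its base. Points of $A^-$ at height at least $r$ lie outside $B(x,r)$, since $\pi_3$ is $1$-Lipschitz. The interior of $A^-$ is connected, so every curve separating $B(x,r/\lambda)$ from $Z\setminus B(x,r)$ must enter the interior of $A^-$, even though $x$ itself is not in a pillowcase. No curve in the base plane, cut or uncut, can do this, and your remark about handling pillowcase interiors separately does not cover this case.

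What is needed is a curve made of an arc in the carpet around $x$, closed up by an arc in $A^-$ at height comparable to $r$. The carpet arc must itself be routed around the smaller slits, which accumulate along $L$ at every scale. You also need analogous constructions at slit endpoints (cone points of total angle $3\pi$) and inside pillowcases, all with constants uniform in the generation and in $n$, and compatible with the seams $\partial T_{n+1}$. That uniform construction, or the gluing theorem the paper cites, is the missing ingredient.

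A smaller point: $\partial Z^+$ does not have uniform collars. The $T_n$ share two sides with it, and slits of the rescaled $Y_n$ come within $2^{-n-2}\delta_n$ of it. This is harmless for doubling, because $\partial Z^+$ is still uniformly bi-Lipschitz embedded in $Z^+$, but the reason you gave for it is wrong.
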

    The doubling property for the underlying slit carpet $W$ is found in Proposition 2.4 of \cite{Mer:10}. The linear local connectedness of $W$ can be found as Lemma 8.4 of \cite{HL:23}. That the same properties necessarily hold for the corresponding pillowcase sphere follows from Theorem 2.6.2 in \cite{Hai:15}. Since the same properties for the space $Z$ can be established along similar lines, we omit the proof here.

    \section{Quasisymmetric deformation to a Loewner space} \label{sec:stage2}

    In the second stage, we quasisymmetrically deform the space $Z$ to be a Loewner space, denoted $X$, satisfying \Cref{thm:main}. 

    \subsection{Quasisymmetrically collapsing the metric}

    Our argument applies the following result of the author in \cite{Rom:19b}. 

    \begin{thm} \label{thm:qs}
        Let $\beta \in (0,1)$. There is a length metric $d_\beta$ on $[0,1]^2$ and a constant $K = K(\beta) > 1$, decreasing as a function of $\beta$, satisfying the following:
        \begin{itemize}
            \item[(1)] The change of metric map $\iota \colon ([0,1]^2,d_{\text{Euc}}) \to ([0,1]^2,d_\beta)$ is quasisymmetric with uniform control function $\eta$.
            \item[(2)] $\iota$ is $K$-Lipschitz.
            \item[(3)] There is a closed set $S \subset [0,1]^2$ with Lebesgue measure greater than $1-\beta^2$ such that the Hausdorff dimension of $\iota(S)$ is at most $1/2$.
            \item[(4)] $[0,1]^2 \setminus S$ is the countable union of disjoint squares (not necessarily open or closed) on which $\iota$ is locally a similarity map. 
            \item[(5)] The metric $d_\beta$ on the boundary curve $\partial [0,1]^2$ is uniformly bi-Lipschitz equivalent to the Euclidean metric.
        \end{itemize} 
    \end{thm}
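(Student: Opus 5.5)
This is a result of \cite{Rom:19b}, so the plan is to reconstruct that construction: an iterated, self-similar ``collapsing'' of the square. The metric $d_\beta$ is produced as a limit of length metrics $d_k = \rho_k\, d_{\text{Euc}}$, with density $\rho_k$ defined on a nested family of dyadic squares.

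\textbf{Step 1 (the construction).} Fix a large integer $m$ depending on $\beta$. At stage $k$, subdivide each surviving square $Q$ into $m^2$ subsquares. Declare a thin ``frame'' of subsquares finished: those adjacent to $\partial Q$, and, at positions along the subdivision, enough to produce a grid of finished corridors. On the finished squares the density is frozen, so $\iota$ is a similarity there; these frozen squares will make up item (4). On the remaining interior subsquares multiply the density by a factor $\lambda<1$, chosen with $\lambda\approx m^{-1/2}$ or smaller so that the collapse is strong. Choose the proportion of finished squares at stage $k$ to be $\beta^2 2^{-k}$. Then the total Lebesgue measure of the finished squares is at most $\beta^2$, and the limiting set $S$ of never-finished points has measure $>1-\beta^2$.

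\textbf{Step 2 (Hausdorff dimension of $\iota(S)$).} At stage $k$, $S$ is covered by about $m^{2k}$ squares of $d_\beta$-diameter about $(\lambda/m)^k$. With $\lambda = m^{-3}$, say, these covers give dimension $\le 2\log m/\log(m^4)=1/2$. This proves (3).

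\textbf{Step 3 (boundary and Lipschitz bounds).} Because the boundary frame at each stage is finished, the boundary of each square retains its Euclidean metric up to a fixed factor. This gives (5). Item (2) holds since $\rho_k\le 1$ after normalization; the constant $K$ only enters when one rescales so that $\iota(\partial[0,1]^2)$ keeps unit size. Smaller $\beta$ means thinner frames and hence worse constants, which is why $K$ depends on $\beta$ and is decreasing in it.

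\textbf{Step 4 (quasisymmetry, the main obstacle).} Each square $Q$ at stage $k$ carries, by construction, a metric that is a scaled copy of one fixed model, up to bounded distortion from its neighbours. The key estimate is that two adjacent squares of the same generation have comparable $d_\beta$-diameters. This holds because both have the same collapse history up to stage $k$ except near the boundary frames, and the frames are bi-Lipschitz by Step 3. From comparable diameters of adjacent squares one gets the standard ``weak quasisymmetry'' three-point estimate, using chains of squares between points. The length-metric structure and the two-dimensional setting then upgrade weak quasisymmetry to quasisymmetry, via doubling and connectedness (Heinonen's Theorem 10.19 type argument).

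The delicate part is ensuring the control function $\eta$ does not depend on $\beta$. I would handle this by noting that the local geometry of every square is governed only by $m$ and $\lambda$. The parameter $\beta$ only sets the relative width of the frames, and it enters solely through the bounded Lipschitz constant $K$.
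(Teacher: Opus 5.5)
\paragraph{Review.} The paper does not reprove this statement. It quotes \cite[Theorem 1.2]{Rom:19b}, takes closedness of $S$ from \cite{Rom:25} and (4), (5) from that construction, and adds only that $\mathcal{L}^2(S)\to 1$ is obtained by letting $K(\beta)\to\infty$ while $\eta$ stays fixed. Your reconstruction has a genuine gap at exactly this point: a purely collapsing construction like yours cannot satisfy (1) and (5) with $\beta$-independent constants.

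\emph{Why $K(\beta)$ must blow up.} Properties (1), (2), (3), (5) with $\beta$-independent data force $K(\beta)\gtrsim\beta^{-1}$. Here is the argument.
\begin{itemize}
\item By (5), each side of the square has $d_\beta$-diameter comparable to $1$. Take $x$ on the left side, $z$ on the left side with $d_\beta(x,z)$ comparable to $1$, and $y$ on the right side. Since $|x-z|\le 1\le |x-y|$, quasisymmetry gives $d_\beta(x,y)\ge d_\beta(x,z)/\eta(1)$. Hence $d_\beta(\text{left},\text{right})\ge c(\eta)$, and likewise for top and bottom.
\item Let $u=d_\beta(\cdot,\text{left side})$, which is $1$-Lipschitz. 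For $s\in(0,d_\beta(\text{left},\text{right}))$, the level set $\{u=s\}$ separates the left side from the right side. So it contains a continuum joining top and bottom, and therefore has $\mathcal{H}^1\ge c(\eta)$.
\item Eilenberg's coarea inequality then yields $\mathcal{H}^2_{d_\beta}([0,1]^2)\ge c'(\eta)>0$.
\item On the other hand, $\mathcal{H}^2_{d_\beta}(\iota(S))=0$ by (3), and $\mathcal{H}^2_{d_\beta}(\iota([0,1]^2\setminus S))\lesssim K^2\beta^2$ by (2). Hence $K\gtrsim \sqrt{c'(\eta)}\,\beta^{-1}$.
\end{itemize}

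\emph{Why your scheme fails this.} Your density is only ever frozen or multiplied by $\lambda<1$. So its supremum sits on the outermost frame, which also sets the scale of $\partial[0,1]^2$. After normalizing as in (5), this gives $K\lesssim 1$. For small $\beta$ the bound above is then violated. The missing idea is that the complementary squares, of total area $<\beta^2$, must be \emph{inflated} by factors up to order $\beta^{-1}$ so that they carry essentially all of the area of the image. This, not ``thinner frames'', is why $K(\beta)$ blows up. Consequently the claim in Step 4 that $\eta$ is governed by $m$ and $\lambda$ alone cannot hold for your construction.

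\emph{Step 1 is also internally inconsistent.} With $m$ fixed and all boundary-adjacent subsquares finished at every stage, a proportion at least $4(m-1)/m^2$ of each surviving square is finished at each stage. Then $\mathcal{L}^2(S)=0$. Achieving the proportions $\beta^2 2^{-k}$ requires $m=m_k\to\infty$, and both ways of handling $\lambda$ then break something:
\begin{itemize}
\item With $\lambda$ fixed, your covering in Step 2 only gives the exponent $\sum_i 2\log m_i/\sum_i(\log m_i+\log\lambda^{-1})\to 2$, so (3) fails.
\item With $\lambda_k\approx m_k^{-3}$, a finished subsquare of density $w$ borders an unfinished one of density at most $\lambda_k w$. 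Take $x$ on their common edge and $y$, $z$ at the same small Euclidean distance on either side. Then $d_\beta(x,y)/d_\beta(x,z)\ge\lambda_k^{-1}\to\infty$, so the claim that ``adjacent squares of the same generation have comparable diameters'' already fails.
\end{itemize}
Even where such comparability holds, it is not sufficient. Quasisymmetry must be checked at every scale, including across thin finished regions that sit next to much more deeply collapsed descendants of the same Euclidean size.
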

    This is essentially Theorem 1.2 in \cite{Rom:19b}, with the observation that the area of $S$ can be made arbitrarily close to $1$ simply by allowing the Lipschitz constant $K$ to become large, while retaining a uniform quasisymmetric control function $\eta$. That requirement that $S$ is closed is not addressed in \cite{Rom:19b}, but it is shown in \cite{Rom:25} how to obtain a set that is closed. Properties (4) and (5) are not included in the statement of Theorem 1.2 of \cite{Rom:19b} but are part of the construction used in its proof.

    Let $\Gamma = \{\gamma_t: a \leq t \leq b\}$ be a path family indexed by the parameter $t$. Given $c \in [0,1]$, we say that \textit{$c$-proportion of paths in $\Gamma$} satisfy a given property if the set $H \subset [a,b]$ for which $\gamma_t$ satisfies the property for all $t \in H$ satisfies $\mathcal{L}^1(H)/(b-a) \geq c$. Here, $\mathcal{L}^m$ denotes the $m$-dimensional Lebesgue measure.

    \begin{lemm} \label{lemm:short_paths}
        Let $\beta \in (0,1)$. Let $\Gamma$ denote the family of horizontal paths traversing $[0,1]^2$, indexed in the usual way. Then $(1-\beta)$-proportion of the paths $\gamma_t \in \Gamma$ satisfy $\ell_{\text{Euc}}(|\gamma|\setminus S) \leq \beta$.     
    \end{lemm}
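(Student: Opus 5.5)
The plan is a Fubini--Chebyshev argument using property (3) of \Cref{thm:qs}. Index the horizontal paths as $\gamma_t(s) = (s,t)$ for $s \in [0,1]$ and $t \in [0,1]$. Consider the function
\[ f(t) = \ell_{\text{Euc}}(|\gamma_t| \setminus S) = \mathcal{L}^1\bigl(\{s \in [0,1] : (s,t) \notin S\}\bigr). \]
Since $S$ is closed, its complement $[0,1]^2\setminus S$ is relatively open and hence Borel. So Fubini's theorem applies to the indicator of the complement. It shows that $f$ is measurable and that
\[ \int_0^1 f(t)\,dt = \mathcal{L}^2([0,1]^2 \setminus S) < \beta^2, \]
where the last inequality is exactly property (3) of \Cref{thm:qs}, namely $\mathcal{L}^2(S) > 1-\beta^2$.

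Next, apply Chebyshev's (Markov's) inequality to $f$. Let $B = \{t \in [0,1] : f(t) > \beta\}$. Then
\[ \beta\,\mathcal{L}^1(B) \le \int_0^1 f\,dt < \beta^2, \]
so $\mathcal{L}^1(B) < \beta$. Take $H = [0,1]\setminus B$. Then $\mathcal{L}^1(H)/(1-0) > 1-\beta$, and every $t \in H$ satisfies $\ell_{\text{Euc}}(|\gamma_t|\setminus S) \le \beta$. This is exactly the statement that a $(1-\beta)$-proportion of the paths in $\Gamma$ have the desired property.

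There is no serious obstacle in this argument. The only points needing care are the measurability of $f$ and the identification of $\ell_{\text{Euc}}(|\gamma_t|\setminus S)$ with the one-dimensional Lebesgue measure of the corresponding slice. The first follows from $S$ being closed, which is why closedness of $S$ is used. The second holds because $\gamma_t$ is an isometric parametrization of a horizontal segment, so length equals $\mathcal{H}^1$ equals $\mathcal{L}^1$ of the slice. The key design choice is to arrange property (3) with measure bound $\beta^2$ rather than $\beta$: the square splits evenly between the threshold $\beta$ and the proportion $\beta$.
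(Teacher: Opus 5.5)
Your proposal is correct and is essentially the paper's argument: the paper lets $c$ be the proportion of paths failing the bound and uses Fubini to get $c\beta \le \mathcal{L}^2([0,1]^2\setminus S) \le \beta^2$, hence $c \le \beta$. That is exactly your Chebyshev step, and your extra remarks on measurability and on identifying length with the Lebesgue measure of the slice are fine.
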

    \begin{proof}
        Let $c$ be such that $c$-propotion of $\Gamma$ does not satisfy the condition. Applying Fubini's Theorem, we have that $\mathcal{L}^2([0,1]^2 \setminus S) \geq c \beta$. Note that $\mathcal{L}^2([0,1]^2 \setminus S) \leq \beta^2$ by \Cref{thm:qs}(3), and so $c \leq \beta$. 
    \end{proof}

    
    We obtain the surface $X$ from $Z$ as follows. For each pillowcase $A \subset Z$ with faces $A^+, A^-$ and side length $a$, we replace each face $A^\pm$ with a copy of $([0,1]^2,d_\beta)$, rescaled by $a$, for $\beta$ sufficiently large as we now specify. Assume that $A$ belongs to $T_n \setminus T_{n+1}$ and $\psi_n^{-1}(A)$ has side length $2^{-k}$. Then we take $\beta = 8^{-k}$. Note that $\beta \geq 8^{-n}$. Let $X^+$ and $X^-$ be the subsets of $X$ obtained from $Z^+$ and $Z^-$, respectively. The following proposition is evident.

    \begin{prop}
        The change of metric map $\iota \colon Z \to X$ is quasisymmetric. 
    \end{prop}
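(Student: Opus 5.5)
The plan is to obtain quasisymmetry of $\iota \colon Z \to X$ by a local-to-global argument. Locally $\iota$ is a uniformly quasisymmetric map, and on the rest of the space the metric is unchanged up to bounded distortion.

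\textbf{Local pieces.} The map $\iota$ is the identity away from the pillowcase faces. On each face $A^\pm$, of side length $a$, it is the rescaled change of metric map from \Cref{thm:qs}. By \Cref{thm:qs}(1), each such restriction is $\eta$-quasisymmetric with a control function $\eta$ that does not depend on $\beta$. This uniformity matters, since $\beta$ changes from pillowcase to pillowcase. By \Cref{thm:qs}(5), the boundary circle of each face is mapped bi-Lipschitz, with a uniform constant, after rescaling by $a$. Consequently the gluing curves (the slit arcs $L,R$ and the three common sides) have comparable lengths in $Z$ and $X$. The same holds for the length metric on any set made only of carpet pieces and pillowcase boundaries.

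\textbf{Comparing distances.} I will show that for each pillowcase $A$ and each $x,y \in A$, the distance $d_X(\iota x,\iota y)$ is comparable to the intrinsic $d_\beta$-distance on the face. The point is that any path leaving $A$ must return through $\partial A$. There the metrics agree up to a bi-Lipschitz factor, and the excursion can be replaced by a path along $\partial A$, which is bi-Lipschitz in both metrics. For points $x,y$ in different pillowcases, or in the carpet part, I will show that $d_X(\iota x,\iota y)$ is comparable to $d_Z(x,y)$ up to an additive error. That error is controlled by the $X$-diameters of the pillowcases containing $x$ and $y$. By \Cref{thm:qs}(2) and (5), these diameters are comparable to $a$, because the diameter of $([0,1]^2,d_\beta)$ lies between the boundary diameter and the constant $K$. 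This step needs care: the estimate must come out independent of $\beta$, so I will use only the lower bound $\diam \geq$ boundary diameter $\approx a$, together with quasisymmetry. The upper bound $K$ is not uniform.

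\textbf{Triples.} With these comparisons, I will verify the three-point condition $d(\iota x,\iota y)/d(\iota x,\iota z) \le \eta'(\cdot)$ by cases.
\begin{enumerate}
\item If all three points lie in one face, or near a single pillowcase at the scale $d_Z(x,y) \lesssim a$, I will use the uniform $\eta$ of that face. Points just outside the face are handled by projecting them to $\partial A$.
\item If the scales are large compared with the pillowcases involved, $d_X$ and $d_Z$ are comparable up to the additive error above, which is bounded by a multiple of the scale. The ratio is then controlled linearly.
\end{enumerate}
A cleaner route, if available, is to cite a gluing lemma for quasisymmetric maps, namely that uniformly quasisymmetric maps on pieces that agree bi-Lipschitz on a uniformly LLC, doubling frame give a global quasisymmetry. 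One could also prove weak quasisymmetry only (the $H$-condition) and upgrade it using the doubling and LLC properties of $Z$ (\cite[Theorem 10.19]{Hei:01}). I expect to take this route, since the $H$-condition only requires comparing balls at one scale.

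\textbf{Main obstacle.} The main difficulty is the mixed scale, where a ball in $Z$ meets many pillowcases of varying sizes and varying $\beta$. I must check that the image ball is comparable to a ball in $X$, uniformly in all the $\beta$'s. I plan to handle this by noting that, by \Cref{thm:qs}(5), shortcuts through a collapsed face can shorten a boundary-to-boundary distance by at most a bounded factor. Hence balls centred in the carpet frame are distorted only boundedly at scales above the pillowcase size, and at smaller scales the single-face case above applies.
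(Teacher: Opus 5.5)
Your plan is correct and is a fleshed-out version of what the paper simply declares evident. On each face $\iota$ acts with the common control function $\eta$ of \Cref{thm:qs}(1), and it changes the frame (the carpet together with all face boundaries) only bi-Lipschitzly by \Cref{thm:qs}(5). You are also right that face diameters must be controlled uniformly through $\diam_{d_\beta}[0,1]^2 \le 2\eta(1)\diam_{d_\beta}\partial[0,1]^2$ rather than through $K$. One caution: \cite[Theorem 10.19]{Hei:01} also requires the target $X$ to be doubling, which you would have to verify separately, so the direct three-point route is cleaner. That route becomes short once you note that quasisymmetry of a face together with \Cref{thm:qs}(5) gives $d_X(x,\partial A^{\pm}) \asymp d_Z(x,\partial A^{\pm})$ uniformly. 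Hence $d_X \asymp d_Z$ for every pair of points not lying in a common face, and the three-point condition reduces essentially to the single-face case.
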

    We continue to denote the image of $T_n$ in $X$ under $\iota$ by $T_n$. Next, the area of $X^+$ satisfies 
    \begin{align*}
        \mathcal{H}_X^2(X^+) & \leq \sum_{n=1}^\infty \mathcal{H}_X^2(T_n \setminus T_{n+1}) \leq \sum_{n=1}^\infty K(8^{-n})^2 \mathcal{H}_Z^2(T_n\setminus T_{n+1})  \\ & \leq \sum_{n=1}^\infty K(8^{-n})^2 \delta_n^2 \left(1+\frac{n}{2}\right) \leq \sum_{n=1}^\infty 4^{-n} \left(1+ \frac{n}{2}\right), 
    \end{align*}
    since $\delta_n \leq  K(8^{-n})^{-1}2^{-n}$. We conclude that $X$ has finite area. 

    \subsection{A modified space $\widetilde{X}$ for modulus estimates}
    
    In the remaining arguments, we will generally not work with the metric $d_X$ directly. Instead, we introduce the following modified space $\widetilde{X}$ to allow for convenient modulus estimates. This is defined similarly to $X$, but replacing the original metric on each pillowcase face $A$ by the metric $d_{A}$ defined as follows. Let $S(A)$ be the subset of $A$ arising from applying \Cref{thm:qs}. Then $d_{A}$ is the length metric given by the conformal weight $\chi_{A \setminus S(A)}$ applied to the Euclidean metric on each face. That is, we define the length of a path $\gamma$ to be $\ell(\gamma) = \int_\gamma \chi_{A \setminus S(A)}\,ds$, where $ds$ here is the Euclidean length element. See Example 2.1 in \cite{Raj:17} for a careful description of the metric arising from such a conformal weight. 

    \begin{lemm}
        The change of metric map $\widetilde{\iota} \colon X \to \widetilde{X}$ preserves conformal modulus.
    \end{lemm}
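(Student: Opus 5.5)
Since modulus is computed locally, it suffices to compare $X$ and $\widetilde{X}$ on each pillowcase face $A$. Outside the pillowcases $\widetilde{\iota}$ is the identity, and the pillowcases together with their complement exhaust the space up to the edges, which are $\mathcal{H}^2$-null in both metrics. On a face $A$ we decompose $A=S(A)\cup(A\setminus S(A))$. By \Cref{thm:qs}(4), $A\setminus S(A)$ is a countable union of disjoint squares $Q_i$, and on each $Q_i$ the map $\iota$ restricted to $Q_i$ is locally a similarity with some ratio $\lambda_i$. In $\widetilde{X}$ the metric $d_A$ restricted to $Q_i$ is (locally) Euclidean, hence also a similarity image of $Q_i$, so $\widetilde{\iota}$ is locally a similarity of ratio $\lambda_i^{-1}$ on the interior of each $Q_i$. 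On $S(A)$ we would show that both metrics give negligible size. In $X$, $\dim_H\iota(S)\le 1/2$ by \Cref{thm:qs}(3). In $\widetilde{X}$ the weight vanishes on $S(A)$, so $\mathcal{H}^2_{\widetilde{X}}(S(A))=0$ and rectifiable paths accumulate no length there. Hence $S(A)$ has $\mathcal{H}^1$ zero in $X$.

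The plan is to show that $\widetilde{\iota}$ induces a correspondence between admissible functions preserving energy, in both directions. First I would verify that $\widetilde\iota$ and its inverse map rectifiable paths to rectifiable paths, with length elements related by
\[ ds_{\widetilde X} = \lambda_i^{-1}\,ds_X \text{ on } Q_i, \qquad ds_{\widetilde X}=ds_X \text{ off pillowcases}, \]
and with zero length accumulated on $S(A)$ on both sides. On the $X$ side this holds because a rectifiable path meets $\iota(S)$ in an $\mathcal{H}^1$-null set, and $\mathcal{H}^1(\iota(S))=0$ since its dimension is at most $1/2$. On the $\widetilde X$ side it holds because the weight is zero there. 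Given $\rho$ admissible for a family $\Gamma$ in $X$, I would set $\tilde\rho=\lambda_i\,\rho\circ\widetilde\iota^{-1}$ on $\widetilde\iota(Q_i)$, $\tilde\rho = \rho\circ\widetilde\iota^{-1}$ off the pillowcases, and $\tilde\rho=0$ on $S(A)$. The line integrals then agree, and since $d\mathcal{H}^2_{\widetilde X}=\lambda_i^{-2}d\mathcal{H}^2_X$ on $Q_i$, the energies agree; the set $S(A)$ carries no mass in $\widetilde X$. The reverse direction is symmetric, putting $\rho=0$ on $\iota(S)$; here I also need $\mathcal{H}^2_X(\iota(S))=0$, which again follows from the dimension bound. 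This gives $\Mod\widetilde\iota(\Gamma)=\Mod\Gamma$.

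The main obstacle is the claim that a rectifiable path $\gamma$ in $\widetilde X$ corresponds to a rectifiable path in $X$ with the same weighted length. A path in $\widetilde X$ may run along $S(A)$ for free, and its image in $X$ might then fail to be rectifiable, or have length concentrated on $\iota(S)$, though the latter is ruled out by $\mathcal{H}^1(\iota(S))=0$. I would handle this by noting that $\widetilde X$ is a length space obtained by collapsing $S(A)$. Distances in $\widetilde X$ are realized by paths crossing countably many squares, and the $\widetilde X$-length of such a path equals the sum over squares of the Euclidean lengths, which by the similarity property equals the sum of the $X$-lengths scaled by $\lambda_i^{-1}$. If full correspondence of paths fails, it suffices to work with the paths that matter. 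Namely, I would show that for any admissible $\rho$ on $X$ vanishing on $\iota(S)$, the pushforward is admissible for all locally rectifiable paths of $\widetilde\iota(\Gamma)$. This can be checked by decomposing each such path into its pieces inside the squares $Q_i$, on which it is a similarity image of a rectifiable path in $X$, plus a part inside $S(A)$ contributing zero on both sides. This uses that $S(A)$ is closed, so the complement is open and the decomposition is into countably many open intervals.
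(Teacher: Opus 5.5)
Your scheme is the paper's own: transfer $\rho$ across $\widetilde\iota$ by the conformal change of variables $\widetilde\rho=\lambda_i\,\rho\circ\widetilde\iota^{-1}$ on the similarity squares $Q_i$ of \Cref{thm:qs}(4), set it to zero on $S(A)$, and compare line integrals and energies. The step that does not go through as written is ``the reverse direction is symmetric''. The paper's ``same argument with roles reversed'' passes over the same point. Your obstacle paragraph examines the harmless direction instead.

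On a face, $\widetilde\iota^{-1}$ has local scale factors $\lambda_i\le K$ by \Cref{thm:qs}(2). Also $\mathcal{H}^1_X(\iota(S))=0$, so a path in $X$ picks up no length on $\iota(S)$. Hence $\ell_X(\widetilde\iota^{-1}\circ\widetilde\gamma)\le K\,\ell_{\widetilde X}(\widetilde\gamma)$, and rectifiability passes from $\widetilde X$ to $X$. Since $\widetilde\rho=0$ on $S(A)$, you never need to know what $\widetilde\gamma$ does inside $S(A)$. So $\Mod\widetilde\iota(\Gamma)\le\Mod\Gamma$ is fine. It also does not require $\mathcal{H}^2_{\widetilde X}(S(A))=0$, which you assert without justification, because an energy inequality suffices.

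The other direction is where the gap is. There $\widetilde\iota$ stretches $Q_i$ by $\lambda_i^{-1}$, and \Cref{thm:qs} gives no upper bound for this. None can be expected, since $\iota$ collapses $S$. Set $g=\sum_i\lambda_i^{-1}\chi_{Q_i}$ on $X$. Then $\int_\gamma g\,ds$ is the $\widetilde X$-length of the part of $\widetilde\iota\circ\gamma$ lying in the squares, and nothing prevents it from being infinite for a rectifiable $\gamma\in\Gamma$. In that case $\widetilde\iota\circ\gamma$ is not rectifiable, admissibility of $\widetilde\rho$ says nothing about it, and the pulled-back $\rho$ need not satisfy $\int_\gamma\rho\,ds\ge1$.

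The missing step is to discard these paths as a family of modulus zero. We have $\int_X g^2\,d\mathcal{H}^2_X=\sum_i\lambda_i^{-2}\mathcal{H}^2_X(Q_i)=\sum_i\mathcal{L}^2(Q_i)$, which is at most the area of $Z$. So $\rho+\varepsilon g$ is admissible for $\Gamma$, and letting $\varepsilon\to0$ gives $\Mod\Gamma\le\Mod\widetilde\iota(\Gamma)$.

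This argument, and the identity $\int_\gamma\rho\,ds=\int_{\widetilde\iota\circ\gamma}\widetilde\rho\,ds$ in this direction, rely on $S(A)$ being $\mathcal{H}^1$-null in $\widetilde X$, so that $\widetilde X$-length is carried by the squares. ``The weight vanishes on $S(A)$'' only shows that Euclidean-rectifiable paths have no $\widetilde X$-length there, so this fact needs its own argument.

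Finally, the bound $\lambda_i\le K(\beta)$ is not uniform over pillowcases, since $\beta=8^{-k}\to0$. So your reduction to a single face also hides an exceptional family in the first direction, namely paths through infinitely many pillowcases. It is removed the same way using $\sum_i\lambda_i\chi_{Q_i}$ on $\widetilde X$, whose square integral is $\sum_i\mathcal{H}^2_X(Q_i)\le\mathcal{H}^2_X(X)<\infty$.
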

    \begin{proof}
        The map $\widetilde{\iota}$ is locally isometric except on the pillowcases. It suffices to show that $\widetilde{\iota}$ preserves modulus on each pillowcase. Let $A$ be such a pillowcase and $\Gamma$ a path family in $A$ as a subspace of $X$. Let $\widetilde{\Gamma} = \{\widetilde{\iota} \circ \gamma: \gamma \in \Gamma\}$. For a given $\gamma \in \Gamma$, observe that $|\gamma| \cap S(A)$ has zero length, and so the path integral of any Borel function $\rho$ over $\gamma$ is the same when restricted to $|\gamma| \setminus S(A)$. The same holds for $|\widetilde{\iota} \circ \gamma| \cap \widetilde{\iota}(S(A))$. Now $\widetilde{\iota}$ is locally a similarity map on the set $|\gamma| \setminus S(A)$, so we can do the usual conformal change of variables. Namely, let $\{U_i\}_{i=1}^\infty$ be an enumeration of the squares in \Cref{thm:qs}(4) on which $\widetilde{\iota}$ is a similarity map, with $\sigma_i>0$ the scaling factor corresponding to $U_i$ under $\widetilde{\iota}$. Let $\rho$ be an admissible function for $\Gamma$, and let $\rho_i = \rho\chi_{U_i}$. Let $\widetilde{\rho}_i =  \rho_i \circ \widetilde{\iota}^{-1}/\sigma_i$ and $\widetilde{\rho} = \sum_{i=1}^\infty \widetilde{\rho}_i$. Then for any path $\widetilde{\gamma} = \widetilde{\iota} \circ \gamma \in \widetilde{\Gamma}$ we have
        \[\int_{\widetilde{\gamma}} \widetilde{\rho}\,ds = \sum_{i=1}^\infty \int_{\widetilde{\gamma}} \widetilde{\rho}_i\,ds = \sum_{i=1}^\infty \int_{\gamma} (\widetilde{\rho}_i \circ\widetilde{\iota})\sigma_i\,ds = \sum_{i=1}^\infty \int_{\gamma} \rho_i\,ds = \int_{\gamma}\rho\,ds .\]
        Thus $\widetilde{\rho}$ is admissible for $\widetilde{\Gamma}$. We further compute
        \[ \int_X \rho^2\,d\mathcal{H}_X^2 = \sum_{i=1}^\infty \int_{U_i} \rho_i^2 \,d\mathcal{H}_X^2 = \sum_{i=1}^\infty \int_{U_i} (\widetilde{\rho}_i \circ \widetilde{\iota})^2 \sigma^2 d\mathcal{H}_X^2=\sum_{i=1}^\infty \int_{\widetilde{\iota}(U_i)} \widetilde{\rho}_i^2d\mathcal{H}_{\widetilde{X}}^2 = \int_{\widetilde{X}} \widetilde{\rho}^2d\mathcal{H}_{\widetilde{X}}^2.\]
        We conclude from these computations that $\Mod \widetilde{\Gamma} \leq \Mod \Gamma$. The same argument with roles reversed shows that $\Mod \Gamma \leq \Mod \widetilde{\Gamma}$. The lemma follows. 
    \end{proof}

    In addition, we will measure the relative distance $\triangle(E,F)$ of two disjoint continua using the original metric $d_Z$ rather than the quasisymmetrically equivalent metric $d_X$. Accordingly, we write $\triangle_{Z}(E,F)$ and $\triangle_{X}(E,F)$ to indicate the choice of metric. This is justified since quasisymmetric mappings distort relative distance in a controlled way: for all $t >0$, there is a corresponding $t'>0$ such that $\triangle_X(E,F) \leq t$ implies $\triangle_Z(E,F) \leq t'$. See for example Lemma 2.1 in \cite{Nta:26}. 

    \subsection{Level $n$ path families}

    We let $\pi = (\pi_1, \pi_2)$ denote the projection map from $X^+$ to $[0,1]^2$ (that is, each point in a pillowcase is mapped to the point in $[0,1]^2$ beneath it). We also let $\pi_3$ denote the vertical coordinate of a point $x$. In particular, $\pi_3(x)>0$ if and only if $x$ belongs to the interior of a pillowcase.

     For all $t \in [0,1]$, we define $\gamma_t$ to be the path whose image is the set $\pi_2^{-1}(t)$. For all $t \in [0,1]$, we define $\widetilde{\gamma}_t$ to be the path whose image is $\pi_1^{-1}(t)$. If $t$ is dyadic, there are multiple choices of paths depending on whether the left or right side of each slit is traversed, so $\widetilde{\gamma}_t$ will denote a choice of such path, with the context making clear which one. Next, let $A$ be a pillowcase, with each face identified as $[0,2^{-m}]^2$ for some $m \in \mathbb{N}$. For all $0 \leq t < 2^{-m}$, let $\theta_{A,t}$ be the simple closed path whose image in each face is $[0,2^{-m}] \times\{t\}$. Also, let $\theta_{A,2^{-m}}$ be a path that traverses the top of the pillowcase. The paths $\gamma_t$ and their subpaths are said to have \textit{type 1}, the paths $\widetilde{\gamma}_2$ and their subpaths \textit{type 2}, and the paths $\theta_{A,t}$ and their subpaths \textit{type 3}. We also say that any path of type 1,2, or 3 is \textit{in a coordinate direction}. Observe that paths in families of type 1 cross over the pillowcase corresponding to any slit it encounters, whereas paths of type 2 avoid the interior of pillowcases. 

     We pick out three base sets of path families for each $n \in \mathbb{N}$, which we collectively refer to as \textit{level $n$ path families}, as follows.
    \begin{itemize}
        \item For each $1 \leq i \leq 2^n$, the path families \[\Gamma_i^n = \{\gamma_t: (i-1)2^{-n} \leq t \leq i 2^{-n}\}.\]
        \item For each $1 \leq i \leq 2^n$, the path families \[\widetilde{\Gamma}_i^n = \{ \widetilde{\gamma}_t: (i-1)2^{-n} \leq t \leq i 2^{-n}\}.\]
        By convention, $\widetilde{\Gamma}_i^n$ contains all possible choices of $\widetilde{\gamma}_t$ in the case where $t$ is dyadic.
        \item For each pillowcase $A$ of side length $2^{-m}$, $n \geq m$, and $1 \leq i \leq 2^{n-m}$ the path families \[\Theta_{A,i}^{n} = \{\theta_{A,t} : (i-1)2^{-n} \leq t \leq i2^{-n}\}.\] 
    \end{itemize}
    We say these path families have \textit{type 1}, \textit{type 2} and \textit{type 3}, respectively. 

    Given values $0 \leq s_1 < s_2 \leq 1$, we let $\gamma_t[s_1,s_2]$ denote the subpath of $\gamma_t$ comprising those points $x \in |\gamma_t|$ with $s_1 < \pi_1(x) < s_2$ together with its limit points. In particular,  $\gamma_t[s_1,s_2]$ crosses over any pillowcases in its interior but not at its endpoints. We similarly let $\widetilde{\gamma}_t[s_1, s_2]$ denote the subpath of $\widetilde{\gamma}_t$ comprising those points $x \in |\widetilde{\gamma}|$ with $s_1 \leq \pi_2(x) \leq s_2$. Let $\theta_{A,t}[s_1^\pm, s_2^\pm]$ denote the subpath of $\theta_{A,t}$ running counterclockwise from $\pi_3^{-1}(s_1) \cap A^+$ or $\pi_3^{-1}(s_1) \cap A^{-}$ (depending on the first choice of $\pm$) to $\pi_3^{-1}(s_2) \cap A^+$ or $\pi_3^{-1}(s_2) \cap A^-$ (depending on the second choice of $\pm$). 
    Define $\Gamma_i^n[s_1, s_2]$ as the set of all paths $\gamma_t[s_1,s_2]$, where $\gamma_t \in \Gamma_i^n$, and likewise for $\widetilde{\Gamma}_i^n[s_1,s_2]$ and $\Theta_{A,i}^{n}[s_1^\pm,s_2^\pm]$. Finally, we define $\Gamma_i^n[s_1]$, $\widetilde{\Gamma}_i^n[s_1]$, and $\Theta_{A,i}^n[s_1^\pm]$ as the respective family of constant paths. Note that $\Gamma_i^n[s_1]$ is not well-defined if the path lies on a slit, since it could potentially lie on the left or right side; then $\Gamma_i^n[s_1]$ refers to a choice of one of these. If needed, we can indicate the side by writing $\Gamma_i^n[s_1^\pm]$. The notation $\Theta_{A,i}^{n}[s_1,s_2]$ and $\Theta_{A,i}^{n}[s_1]$ is interpreted similarly. 
    


    The proof of the Loewner property depends on the following estimate on the $\widetilde{X}$-length of paths of type 1. Given a type 1 path family $\Gamma_i^n[s_1,s_2]$, let $\mathcal{P}_i^n(s_1,s_2)$ denote the collection of pillowcases of side length $s_2-s_1$ or greater whose interior is intersected by some path in $\Gamma_i^n[s_1,s_2]$. Note that all pillowcases in $\mathcal{P}_i^n(s_1,s_2)$ share the same $\pi_1$-coordinate. 

    \begin{lemm} \label{lemm:type_1_estimate}
        $\frac{1}{4}$-proportion of the paths $\gamma_t \in \Gamma_i^n[s_1,s_2]$ satisfy
        \begin{equation} \label{eq:type_1}
            \ell_{\widetilde{X}}\left(|\gamma_t[s_1,s_2]| \setminus \bigcup \mathcal{P}_i^n(s_1,s_2)\right) \leq 2(n+\log_2(s_2-s_1) + 3) (s_2 - s_1). 
        \end{equation}
    \end{lemm}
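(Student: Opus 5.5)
Fix $n$, $i$ and $0\le s_1<s_2\le1$, and set $s=s_2-s_1$ and $t\in[(i-1)2^{-n},i2^{-n}]$. The plan is to split the $\widetilde X$-length of $\gamma_t[s_1,s_2]$ outside $\bigcup\mathcal P_i^n(s_1,s_2)$ into two parts. The first is the flat part: the portion of $\gamma_t$ lying in the base square. The second is the portion crossing pillowcases of side length less than $s$. The flat part has $\widetilde X$-length at most $s$, since the base is unchanged (it is Euclidean up to the rescaled copies $\psi_m$, which only shrink lengths). For the pillowcase part, note that $\gamma_t$ crosses a pillowcase $A$ of side $2^{-k}$ (in the $Y_m$ scale) exactly when $t$ lies in the vertical range of its slit. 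In each face, that crossing is a horizontal segment of Euclidean length $2^{-k-1}$ in the face coordinates, so it is a horizontal path in the rescaled copy of $[0,1]^2$. In $\widetilde X$, the length of this crossing is the Euclidean length of the segment outside $S(A)$. By \Cref{lemm:short_paths}, after rescaling, a $(1-\beta)$-proportion of the heights in $A$ give crossings of length at most $\beta\cdot 2^{-k-1}$ per face, with $\beta=8^{-k}$. I will call these good heights and the rest bad.

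Next I count pillowcases of a fixed generation $k$, with $2^{-k}<s$, met by $\gamma_t[s_1,s_2]$. The slits of generation $k$ sit at horizontal positions $2^{-k}(j-1/2)$. In a window of width $s$ there are at most $2^k s+1\le 2^{k+1}s$ columns, and $\gamma_t$ meets at most one slit per column. Each crossing contributes at most $2\cdot 2^{-k-1}=2^{-k}$ (both faces, even for a bad height), so generation $k$ contributes at most $2^{k+1}s\cdot 2^{-k}=2s$. For generations $k$ with $\log_2(1/s)<k\le n+3$, this crude bound gives at most $(n+\log_2 s+3)\cdot 2s$ in total. That matches the main term $2(n+\log_2(s_2-s_1)+3)(s_2-s_1)$, with the flat part absorbed if I use $k$ ranging up to $n+2$ plus the flat term, adjusting constants as needed. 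For generations $k>n+3$, I will bound the length only on good heights: each crossing then costs at most $8^{-k}2^{-k}$, so generation $k$ costs at most $2^{k+1}s\cdot 8^{-k}2^{-k}=2s\,8^{-k}$, and the sum over all such $k$ is negligible (well below $s$).

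The main obstacle is the measure count: I must show that at least a $\tfrac14$-proportion of $t$ in an interval of length $2^{-n}$ avoids bad heights in every pillowcase of generation $k>n+3$ met by $\gamma_t[s_1,s_2]$ (or in every generation, if the crude bound is to be avoided). Fix such a $k$ and a column. Within the $t$-interval of length $2^{-n}$ there are $2^{k-n}$ slits of generation $k$ stacked vertically, each with bad set of measure at most $8^{-k}\cdot 2^{-k-1}$ in $t$. I will take a union bound over the at most $2^{k+1}s\le 2^{k+1}$ columns and all $k>n+3$, since the bad $t$-sets of different columns are distinct. The total bad measure is then at most $\sum_k 2^{k+1}\cdot 2^{k-n}\cdot 8^{-k}2^{-k-1}=2^{-n}\sum_k 8^{-k}$, which is a tiny fraction of $2^{-n}$. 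So far more than $\tfrac14$ of the paths are good. Inside the rescaled copies $T_m$, I will run the same argument with $k$ replaced by the level in $Y_m$, using $\beta=8^{-k}\ge 8^{-m}$ and $\delta_m$ small; there the lengths are smaller by the factor $\delta_m$, so these contributions are summable and dominated by the bound above. Once the bad measure is controlled, the length bound \eqref{eq:type_1} follows from the generation-by-generation estimates.
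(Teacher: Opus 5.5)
Your architecture matches the paper's. The flat part costs at most $s=s_2-s_1$. Pillowcases between a cutoff size and $s$ are bounded crudely, one generation at a time. For the smaller ones, \Cref{lemm:short_paths} plus a union bound over bad heights leaves a positive proportion of paths that pick up negligible length there. To get the stated constant you should cut at side $2^{-n}$ rather than at generation $n+3$, and charge each generation $2^{-j}$, where $2^{-j-1}<s\le 2^{-j}$, rather than $2s$; this is what the paper does. (Your union-bound terms are $2^{-n}4^{-k}$, not $2^{-n}8^{-k}$, which is harmless.)

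The genuine gap is the rescaled copies. You treat them as a summable afterthought, but they are the whole problem. Outside $T_2$ the space $Z$ contains only the slits of $Y_1$ (generations $0$ and $1$). Every smaller pillowcase lies in some $T_m\setminus T_{m+1}$, and by the rule defining $X$ a pillowcase there of actual side $2^{-k}$ carries $\beta=8^{-(k-r_m)}$, where $\delta_m=2^{-r_m}$, not $8^{-k}$. So your main computation applies to no pillowcase at all, and your closing sentence does not replace it. First, $\beta\ge 8^{-m}$ is a lower bound, whereas the bad-set estimate needs an upper bound on $\beta$ in terms of $n$. Second, the factor $\delta_m$ buys nothing when $\delta_m\ge 2^{-n}$, and this is forced whenever a strip with $i\ge 2$ meets a slit of $T_m$.

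The missing idea is the paper's. For $i\ge 2$ the strip meets slits of a single copy $T_q$, with $\delta_q\ge 2^{-n}$, i.e.\ $r_q\le n$. For $i=1$ one first discards the paths entering deeper copies, using $\delta_{q+1}\le 2^{-q-3}\delta_q$. However, $r_q\le n$ (i.e.\ $\beta\le 8^{-(k-n)}$) combined with your column count $2^{k+1}s$ does not close the union bound. The generation-$k$ term is then of order $s\,4^{-(k-n)}$, which is not small compared with $2^{-n}$ once $s\gg 2^{-n}$, and that is exactly the regime of \Cref{lemm:chain}.

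You must pair the exact value $\beta=8^{-(k-r_q)}$ with the fact that the pillowcases of $T_q$ lie in a column range of width $\delta_q$. This gives at most $2^{k-1-r_q}$ columns of side $2^{-k}$, and the bad measure is then at most $2^{2r_q-3n-1}/3\le 2^{-n-1}/3$ for the cutoff at side $2^{-n}$. Equivalently, run your model computation in the coordinates of $Y_q$. There the strip is a level-$(n-r_q)$ family, the width is $1$, and $\beta=8^{-k'}$ for $Y_q$-side $2^{-k'}$, exactly as in your model. (The paper's own display bounding $\mathcal{L}^1(H_i^n(s_1,s_2))$ counts columns by $2^{k-j-1}$ and drops a factor $2^n$. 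Its first sum equals $2^{-j}/3$, which exceeds $2^{-n}$ when $j\le n-2$, so the width restriction is needed there too.)
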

    \begin{proof}
        Observe that any pillowcase crossed by some path in $\Gamma_i^n[s_1,s_2]$ that is not in $\mathcal{P}_i^n(s_1,s_2)$ must have side length less than $s_2 - s_1$. We split these remaining pillowcases into two groups. Let $\mathcal{Q}_i^n(s_1,s_2)$ denote those of side length at least $2^{-n}$, and let $\mathcal{R}_i^n(s_1,s_2)$ denote those of side length less than $2^{-n}$. Let $j = \lfloor -\log_2(s_2 - s_1) \rfloor$, noting that $j \geq 0$.

        Any individual path in $\Gamma_i^n[s_1,s_2]$ crosses at most $2^{k-j-1}$ pillowcases of side length $2^{-k}$, since these pillowcases have horizontal separation of $2^{-k+1}$. The length of its intersection with a given pillowcase is $2^{-k+1}$.
        Thus for each $\gamma_t[s_1,s_2]$, its total length contained in pillowcases from $\mathcal{Q}_i^n(s_1,s_2)$ is at most
        \[\sum_{k= j+1}^n 2^{k-j-1}\cdot 2^{-k+1} = \sum_{k= j +1}^n 2^{-j} \leq 2^{-j}(n-j) .\]
        Next, we consider those pillowcases in $\mathcal{R}_i^n(s_1,s_2)$. We observe that $\mathcal{R}_i^n(s_1,s_2)$ contains at most 
        \[2^{k-j-1}\cdot 2^{k-n} = 2^{2k-j-n-1}\]
        pillowcases of side length $2^{-k}$ for each $k>n$.

        There is at most one value $q \in \mathbb{N}$ with the property that some path in $\Gamma_i^n[s_1,s_2]$ intersects a pillowcase in the set $T_q \setminus T_{q+1}$ (defined in \Cref{sec:rescaling}), considered here as a subset of $Z$, except in the case that $i=1$. In the latter case, let $q$ be the minimal value such that some path in $\Gamma_i^n[s_1,s_2]$ intersects a slit in $T_q \setminus T_{q+1}$. By the assumptions on $\delta_n$ in \Cref{sec:rescaling}, at most $\frac{1}{4}$-proportion of the paths in $\Gamma_i^n[s_1,s_2]$ intersect pillowcases in $T_{q'}$ for $q'>q$, and thus the following analysis will consider only pillowcases in $T_q \setminus T_{q+1}$. 

        Note that \Cref{thm:qs} is applied to each pillowcase $A \in \mathcal{R}_i^n(s_1,s_2)$ in $T_q \setminus T_{q-1}$ of side length $2^{-k}$ for $\beta =\beta_{k,q}$ of the form $\beta_{k,q} = 8^{-k+r}$, where $r = -\log_2 \delta_q$. It necessarily holds that $k-r \geq k-n$, so that $\beta_{k,q} \leq 8^{-(k-n)}$. 

        Next, by \Cref{lemm:short_paths}, $(1-2\beta_{k,q})$-proportion of the paths in $\Gamma_i^n[s_1,s_2]$ intersect a given pillowcase in $\mathcal{R}_i^n(s_1,s_2)$ in a set of $\widetilde{X}$-length at most $2\beta_{k,q}2^{-k}$. Let $H_i^n(s_1,s_2)$ denote the set of values $t \in ((i-1)2^{-n},i2^{-n})$ for which the path $\gamma_t[s_1,s_2]$ intersects some pillowcase in $\mathcal{R}_i^n(s_1,s_2)$ of side length $2^{-k}$ in a set with $\widetilde{X}$-length greater than $2\beta_{k,q} 2^{-k}$. We see that $H_i^n(s_1,s_2)$ satisfies
        \begin{align*}
            \mathcal{L}^1(H_i^n(s_1,s_2)) & \leq \sum_{k = n+1}^\infty 2^{2k-j-n-1} \cdot 2\cdot  8^{-(k-n)} \cdot 2^{-k} \\
             & = 2^{-j} \sum_{k=n+1}^\infty 2^{-2k+n} = 2^{-j} \cdot \frac{4^{-(n+1)}}{1 - 4^{-(n+1)}}\cdot 2^{n} \\
             & \leq 2^{-n-1}. 
        \end{align*}
        This verifies that at most $\frac{1}{2}$-proportion of the paths in $\Gamma_i^n[s_1,s_2]$ are in $H_i^n(s_1,s_2)$. We conclude that at least $1/4$ proportion of the paths in $\Gamma_i^n[s_1,s_2]$ intersect only pillowcases in $T_q \setminus T_{g+1}$ and every pillowcase in $\mathcal{R}_i^n(s_1,s_2)$ in a set of length at most $2\beta_{k,q} 2^{-k}$. For such a path $\gamma_t$, its total $\widetilde{X}$-length of intersection with the pillowcases in $\mathcal{R}_i^n(s_1,s_2)$ is at most
        \begin{align*}
            \sum_{k=n+1}^\infty N(k) \beta_{k-j} & \leq \sum_{k=n+1}^\infty 2^{k-j-1}\cdot 2 \cdot 8^{-(k-n)}\cdot 2^{-k} \\ 
            & = \sum_{k=n+1}^\infty 2^{-3k+3n-j} \leq 2^{-j}.
        \end{align*}
        Since $2^{-j-1} < s_2-s_1 \leq 2^{-j}$, we conclude that 
        \[\ell_{\widetilde{X}}(\gamma_t[s_1,s_2]) \leq s_2 - s_1 + 2^{-j}(n-j) + 2^{-j} \leq 2(n-j+2)(s_2-s_1).\]
        Using the relation $-j < \log_2(s_2-s_1)+1$, this yields the lemma. 
    \end{proof}

        In the next lemma, we find a large level $n$ path family passing through a given continuum. For its statement, we say that $s \in \mathbb{R}$ is \textit{$j$-dyadic} if $s$ is a multiple of $2^{-j}$. 
        We also make the following definition.

        \begin{defi} \label{defi:extension}
            For a level $n$ path family $\Lambda[s_1,s_2]$ and $\varepsilon>0$, the \textit{$\varepsilon$-extension} of $\Lambda[s_1,s_2]$ is the level $n$ path family in $X^+$ of the same type formed by lengthening each path in the following way. If $\Lambda[s_1,s_2]$ has type 2 or type 3, then we extend each path at each endpoint by $\varepsilon$ if possible, or the maximal extension in that direction if not possible (because the path reaches the boundary of $X^+$ or wraps entirely around a pillowcase). 
            
            If $\Lambda[s_1,s_2]$ has type 1, there are two possible rules for how to extend on each side. Let $\gamma \in \Gamma_{i}^n[s_1,s_2]$ be the restriction of a path $\gamma' \in \Gamma_{i}^n$. If $\gamma'$ intersects a pillowcase $P$ with $s_1- \varepsilon < \pi_1(P) \leq s_1$ with side length $2\varepsilon$ or greater, then we extend $\gamma$ to the left until the base of $P$ and then by length $\varepsilon$ up the right side of $P$. If no such $P$ exists, then we extend $\gamma'$ to the left until reaching a point $w_1$ with $\pi_1(w) = s_1 - \varepsilon$ if possible, or the maximal extension otherwise. Extend on the right similarly.

            We define the $\varepsilon$-extension of the level $n$ path family $\Lambda[s_1]$ similarly.
    \end{defi}

    \begin{lemm} \label{lemm:diam}
        Let $E \subset X^+$ be a continuum satisfying $\diam_Z(E) \geq 2^{-k}$ for some $k \in \mathbb{N}$. There exist a $(k+4)$-dyadic number $s$ and a level $k+5$ path family $\Lambda[s]$ such that each path in the $2^{-k-2}$-extension of $\Lambda[s]$ intersects $E$.
    \end{lemm}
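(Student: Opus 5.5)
The plan is to locate a point of $E$ and a small coordinate window around it. By connectedness, $E$ must leave this window, and the side through which it leaves determines the type of the family $\Lambda[s]$. Fix $x_0 \in E$. Since $\diam_Z(E) \geq 2^{-k}$, there is $x_1 \in E$ with $d_Z(x_0,x_1) \geq 2^{-k-1}$. I would choose a $(k+4)$-dyadic number $s$ close to the relevant coordinate of $x_0$, within $2^{-k-5}$. I would then consider the ``window'' $U$ of all points $x$ whose coordinates lie within $2^{-k-3}$ of those of $x_0$. For points of $X^+$ off the pillowcases these are $\pi_1$ and $\pi_2$; for points inside a pillowcase they are $\pi_2$ and $\pi_3$. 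Using property (5) of \Cref{thm:qs} and the fact that $Z$ is built from length metrics on Euclidean pieces, $d_Z$ is comparable to these coordinate displacements at this scale. Hence $x_1 \notin U$ once the constants are fixed. By the boundary bumping theorem, the component of $E \cap \overline U$ containing $x_0$ meets $\partial U$. This gives a subcontinuum $E' \subset E$ that joins $x_0$ to a point of $\partial U$ and has one coordinate displacement of size at least $2^{-k-3}$.

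I would then split into three cases according to which coordinate $E'$ displaces.

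\emph{Case 1: $\pi_1$ changes by $2^{-k-3}$.} For every non-dyadic $t$ strictly between the $\pi_1$-values, the continuum $E'$ meets $\pi_1^{-1}(t) = |\widetilde\gamma_t|$. This uses that pillowcase interiors project onto slits, which have dyadic $\pi_1$-coordinate, so a continuum cannot jump across $\pi_1^{-1}(t)$. The meeting point has $\pi_2$ within $2^{-k-3} + 2^{-k-5} < 2^{-k-2}$ of $s$. An interval of $t$-values of length $2^{-k-3}$ contains a full interval $[(i-1)2^{-k-5}, i2^{-k-5}]$. I would take $\Lambda[s] = \widetilde\Gamma_i^{k+5}[s]$, with $s$ chosen near $\pi_2(x_0)$. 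The dyadic $t$ are handled by closedness of $E$ together with the convention that $\widetilde\Gamma$ contains both side choices.

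\emph{Case 2: $\pi_2$ changes.} This is symmetric, giving a type 1 family $\Gamma_i^{k+5}[s]$ with $s$ near $\pi_1(x_0)$. Each horizontal line $\pi_2^{-1}(t) = |\gamma_t|$ separates $X^+$, so $E'$ meets $\gamma_t$.

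\emph{Case 3: $E'$ lies in one pillowcase $A$ and $\pi_3$ changes.} Then $A$ has side length at least $2^{-k-3}$, so it is a pillowcase of generation at most $k+3$. The type 3 family $\Theta_{A,i}^{k+5}[s^\pm]$ works in the same way, since the loops $\theta_{A,t}$ separate $A$.

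The step I expect to be the main obstacle is Case 2. The meeting point of $E'$ with $\gamma_t$ has $\pi_1$ within $2^{-k-2}$ of $s$. However, the extension rule in \Cref{defi:extension} for type 1 paths does not simply extend horizontally by $2^{-k-2}$. When a pillowcase $P$ of side length at least $2^{-k-1}$ sits in the range $(s-2^{-k-2}, s]$, the extension turns up the side of $P$ only to height $2^{-k-2}$. So $E'$ might meet $\gamma_t$ higher up inside $P$, or beyond $P$, and be missed. To handle this, I would first choose $s$ to avoid such large pillowcases where possible. There are few such pillowcases at scale $2^{-k}$, so their dyadic columns can be avoided by shifting $s$ by a multiple of $2^{-k-4}$. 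If this cannot be done, I would shrink the window so that $E'$ stays on one side of $P$, or reduce to Case 3 when $E'$ enters $P$ deeply. I would also use the rescaled copies $T_n$ to ensure the geometry near $(0,0)$ does not break the comparison between $d_Z$ and the coordinates.
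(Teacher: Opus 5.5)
\textbf{There is a genuine gap.} Your architecture is the same as the paper's: take a far pair of points of $E$, put a coordinate box around one of them, let the component of $E$ in the box reach its boundary, and read off the type of $\Lambda[s]$ from the exit side. The problem is that the step carrying the proof is asserted incorrectly.

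\emph{The comparability claim is false.} You claim $d_Z$ is comparable to coordinate displacement at scale $2^{-k}$, so that $x_1\notin U$. Two counterexamples:
\begin{itemize}
\item A point just left of the midpoint of a slit of length $\gg 2^{-k}$ and the point just right of it have almost the same $(\pi_1,\pi_2)$, but their $d_Z$-distance is comparable to the slit length.
\item A point high on a face of a large pillowcase has the same $(\pi_1,\pi_2)$ as its foot. If your box bounds only $\pi_1,\pi_2$, then $E$ can even lie entirely inside it.
\end{itemize}
Property (5) of \Cref{thm:qs} does not help: it concerns $d_\beta$, which only enters $X$, while in $Z$ the faces are Euclidean squares.

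\emph{Your proposed remedies do not close it.} This is the same large-pillowcase phenomenon you meet later in Case 2. Moving $s$ does not help, since the issue is where $E'$ meets $\gamma_t$, not where $s$ sits. Shrinking a box centred at $x_0$ to keep it off $P$ is impossible when $x_0$ lies on or next to the base slit of $P$: the box would have to shrink below the level-$(k+5)$ scale the statement requires.

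\emph{The missing idea is to cut the box asymmetrically and add a collar matched to the extension length.} This is what the paper does.
\begin{itemize}
\item Take a square $Q$ of side $2^{-k-2}$ with $(k+3)$-dyadic corners, containing $x$ at distance at least $2^{-k-4}$ from each side. At most one pillowcase $P$ of side length $\geq 2^{-k-2}$ meets the interior of $Q$.
\item Replace $Q$ by the closure $Q'$ of the component of $Q\setminus P$ containing $x$.
\item Form $Q''$ by adding the small pillowcases enclosed by $Q'$, together with the part of the face of $P$ on $x$'s side at height $\pi_3\le 2^{-k-2}$.
\end{itemize}
Explicit paths reach every point of $Q''$ within $d_Z$-distance $2^{-k}$ of $x$: go vertically, then along a horizontal side of $Q$ avoiding $P$, then vertically, then at most $2^{-k-2}$ up $P$. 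Hence the diametral partner $y$ lies outside $Q''$, and the component $E'$ of $E\cap Q''$ through $x$ reaches $\partial Q''$ or the top edge of the collar.

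Now $E'$ stays on $x$'s side of $P$ and below height $2^{-k-2}$ in it. That is exactly what the type-1 rule in \Cref{defi:extension} can capture, since that rule climbs only $2^{-k-2}$ up the near face of $P$. Exit through the top of the collar is precisely the type-3 case with $\Theta_{A,1}^{k+5}$.

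Without this construction or an equivalent one, neither $x_1\notin U$ nor the capture of the intersection points by the extended type-1 paths is justified. You also still need separate reductions when the far pair lies in pillowcase interiors. The paper passes to another point of $E$ off the pillowcases, or works within a single face.
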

    \begin{proof}
        Let $x,y \in E$ be points satisfying $d_Z(x,y) = \diam_Z(E)$. Assume that one point, say $x$, does not belong to the interior of a pillowcase. The point $x$ belongs to the offset dyadic square \[[p2^{-k-3}+2^{-k-4},(p+1)2^{-k-3}+2^{-k-4}] \times [q2^{-k-3}+2^{-k-4},(q+1)2^{-k-3}+2^{-k-4}]\]
        for some integers $p,q$. It follows that $x$ belongs to the square 
        \[Q = [p2^{-k-3},(p+2)2^{-k-3}] \times [q2^{-k-3},(q+2)2^{-k-3}]\]
        of side length $2^{-k-2}$ with 
        \[\min\{\pi_1(x) - p2^{-k-3}, (p+2)2^{-k-3} - \pi_1(x)\} \geq 2^{-k-4}\] and \[\min\{\pi_2(x) - q2^{-k-3}, (q+2)2^{-k-3} - \pi_2(x)\} \geq 2^{-k-4}.\]
        Here, we identify $Q$ with the subset of $Z^+$ of points $w$ with $(\pi_1(w),\pi_2(w)) \in Q$ and $\pi_3(w) = 0$. 

        There is at most one pillowcase $P$ with side length $2^{-k-2}$ or greater that intersects the interior of $Q$. Such a pillowcase $P$ can potentially separate $Q \setminus P$ into two components. In this case, let $Q'$ be the closure of the component of $Q \setminus P$ containing $x$. Let $Q' = Q$ otherwise. For an arbitrary point $z \in Q'$, we can define a path $\alpha_z$ from $x$ to $z$ of length at most $3 \cdot 2^{-k-2}$ as follows. At least one of the horizontal arcs of $\partial Q$ from $\pi_1^{-1}(\pi_1(x))$ to $\pi_1^{-1}(\pi_1(z))$ avoids the pillowcase $P$ and hence does not cross over any pillowcase. Let $\alpha_2$ be such a path. Let $\alpha_1$ be the vertical path from $x$ to an endpoint of $\alpha_2$, and $\alpha_3$ the vertical path from $z$ to the other endpoint of $\alpha_2$, and let $\alpha = \alpha_1 * \alpha_2 * \alpha_3$.  

        Let $Q''$ be the union of $Q'$ with any pillowcases it encloses (which have side length length at most $2^{-k-2}$) together with the set of points $w \in P$ satisfying $(\pi_1(w),\pi_2(w)) \in Q'$ and $\pi_3(w) \leq 2^{-k-2}$ on the same side of the pillowcase as $Q'$. We see that any point $w \in Q''$ satisfies $d_Z(w,x) \leq 3 \cdot 2^{-k-2} + 2^{-k-2} = 2^{-k}$, with equality only on the boundary of $Q''$ or the top edge of $P$. We see that the point $y$ must lie outside the interior of $Q''$ or on the top edge of $P$. Thus there is a component $E'$ of $E$ in $Q''$ containing $x$ and a point $z$ on $\partial Q''$ or on the top edge of $P$. 

        One of the following options must hold: $\pi_1(z) = p2^{-k-3}$, $\pi_1(z) = (p+2)2^{-k-3}$, $\pi_2(z) = q2^{-k-3}$, $\pi_2(z) = (q+2)2^{-k-3}$, or $\pi_3(z) = 2^{-k-2}$. Assume that $\pi_3(z) = 2^{-k-2}$. Note that $z$ must belong to the pillowcase $P$. We take $\Lambda = \Theta_{A,1}^{k+5}$ and $s = (q+1)2^{-k-3}$, observing that $\pi_2(Q'' \cap P)$ necessarily contains $s$. For each $t \in [0, 2^{-k-2}]$, the set $E$ must contain a point of $p \in P$ with $\pi_3(p) = t$. It is immediate from the definition of $2^{-k-2}$-extension that each path in $\Lambda[s]$ must pass through such a point $p$. The conclusion of the lemma follows. 

        A similar argument handles each of the remaining cases from the previous paragraph.

        

        Next, assume that $x,y$ are both contained in the interior of pillowcases. Suppose that $E$ contains some other point $z$ not contained in the interior of a pillowcase. The point $p$ satisfies $d_Z(x,z) \geq 2^{-k-1}$ or $d_Z(x,y) \geq 2^{-k-1}$. Repeating the previous argument using the pair $x,z$ or $y,z$, respectively, gives the same conclusion, noticing that it is still possible to choose $\Lambda$ to be a level $k+5$ path family and $s$ to be $(k+4)$-dyadic.

        Finally, assume that $x,y$ belong to the interior of the same pillowcase $P$ with base $\{s\} \times [t_1,t_2]$, where $x,y$ belong to the same face $P'$. Identify this face with $[t_1,t_2] \times [0,t_2-t_1]$ Then the set \[ Q = [\pi_2(x) - 2^{-j-1}, \pi_2(x) + 2^{-j-1}] \times [\pi_3(x) - 2^{-j-1}, \pi_3(x) + 2^{-j-1}] \cap P'\] is all within $Z$-distance $2^{-j}$ from $x$. Hence $y$ does not belong to the interior of the set. From this we conclude that either $|\pi_2(x) - \pi_2(y)| \geq 2^{-j-1}$ or $|\pi_3(x) - \pi_3(y)| \geq 2^{-j-1}$. Follow a similar argument as the first case. 
        
        If $x,y$ belong to different sides of the same face of $P$, then the set $E$ must contain a point $z$ along the side edges of $P$. Repeating the previous case with $x,z$ or $y,z$ gives the same conclusion.
    \end{proof}


    \subsection{Grid paths} \label{sec:grid_paths}

    Next, given two points $x,y \in X^+$, we want to find a well-behaved path connecting them. The idea is that this should be a path comprising a small number of pieces, each in a coordinate direction, with controlled length. We say that a point $x$ is \textit{$j$-dyadic} if $\pi_i(x)$ is a multiple of $2^{-j}$ for each $i \in \{1,2,3\}$.


    \begin{lemm} \label{lemm:grid_path}
        Let $x,y \in X^+$ satisfy $\pi_1(x) \leq \pi_1(y)$. There is a path $\alpha_{x,y}$ from $x$ to $y$, which we call the \textit{grid path} from $x$ to $y$, that is the concatenation of at most three paths $\alpha_i$ satisfying the following:
        \begin{enumerate}
            \item Each $\alpha_i$ is in a coordinate direction. Paths of type 1 alternate with paths of type 2 or 3.
            \item If the points $x,y$ are $j$-dyadic, then the endpoints of the intermediate paths $\alpha_i$ are also $j$-dyadic.
            \item If $\alpha_i$ is type 2 or type 3, then its $Z$-length is at most $2d_Z(x,y)$. If $\alpha_i$ is type 2 and both endpoints of $\alpha_i$ lie on slits, then either both lie on the right side of their respective slit or both lie on the left.
            \item If $\alpha_i$ is type 1, then $\pi_1(|\alpha_i|) \subset [\pi_1(x), \pi_1(y)]$. Moreover, if $\alpha_i$ crosses into the interior of a pillowcase $P'$ of side length $2^{-k}$, then one of the following holds:
            \begin{enumerate}
                \item $2^{-k} \leq 2M$, where $M = \max\{ \pi_1(y) - \pi_1(x), |\pi_2(y) - \pi_2(x)| \}$.
                \item $|\alpha_i| \cap P'$ has $Z$-length at most $d_Z(x,y)$.
            \end{enumerate}
        \end{enumerate}
    \end{lemm}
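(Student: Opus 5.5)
We construct $\alpha_{x,y}$ by an explicit case analysis on where $x$ and $y$ sit: on the base square ($\pi_3=0$) or in the interior of a pillowcase face. The guiding picture is the Euclidean L-shaped path. Move vertically (type 2) from $x$ to the height $\pi_2(y)$, then horizontally (type 1) to $\pi_1(y)$. If $y$ lies in a pillowcase, finish with a type 3 path up the face.

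\emph{Case 1: $x$ and $y$ both lie on the base.} Take $\alpha=\widetilde\gamma_{\pi_1(x)}[\pi_2(x),\pi_2(y)] * \gamma_{\pi_2(y)}[\pi_1(x),\pi_1(y)]$, or the reverse order, horizontal first. We choose between the two L-shapes so that the type 2 leg has $Z$-length at most $2d_Z(x,y)$. Each type 2 path avoids pillowcase interiors and runs on a fixed side of any slit it meets, which gives the side-consistency requirement in (3). We pick the side of $\widetilde\gamma_{\pi_1(x)}$ to match the side of the slit on which $x$ sits. The key input for the length bound is that $d_Z$ restricted to the base dominates the Euclidean distance: $|\pi_2(x)-\pi_2(y)|\le d_Z(x,y)$, and similarly for $\pi_1$. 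This holds because $\pi$ is $1$-Lipschitz from $Z^+$ to $[0,1]^2$. Dyadicity in (2) is automatic, since the corner $(\pi_1(x),\pi_2(y))$ has coordinates taken from $x$ and $y$. For (4), the type 1 leg has $\pi_1$-range exactly $[\pi_1(x),\pi_1(y)]$. The leg fully crosses any pillowcase whose base it meets in the interior. Such a pillowcase sits at a slit $\{s\}\times[t_1,t_2]$ with $\pi_1(x)<s<\pi_1(y)$ and $\pi_2(y)\in(t_1,t_2)$. We must show that either its side length $2^{-k}$ is at most $2M$, or the crossing length $2^{-k+1}$ is at most $d_Z(x,y)$.

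This dichotomy is the main obstacle. If $2^{-k}>2M$, then the segment from $x$ to $y$ in the base square is "blocked" by the slit. A geodesic in $Z$ from $x$ to $y$ must then either go around the slit or over the pillowcase. Going around costs vertical distance comparable to $\min(\pi_2(y)-t_1,\,t_2-\pi_2(y))$. If this quantity is small, we instead route the type 1 leg at a different height. To do that, we use the freedom to move vertically first on a suitable side, going around the slit end. The resulting type 2 length is controlled by $d_Z(x,y)$ exactly because the geodesic must do the same. So the real work is choosing the intermediate height. We would take the height at which a $Z$-geodesic passes the slit, rounded to a dyadic value to preserve (2). Then we verify that the type 2 legs have length at most $2d_Z(x,y)$ and that every large pillowcase crossed has crossing length at most $d_Z(x,y)$. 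Both verifications use again that $\pi$ is $1$-Lipschitz and that large slits are separated horizontally by at least their own length.

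\emph{Case 2: $x$ or $y$ lies in a pillowcase interior.} First move along a type 3 path from that point down to the base of its face, or across the top edge if $x$ and $y$ lie in the same pillowcase on opposite faces. Then apply Case 1. When exactly one point lies in a pillowcase, we merge its type 3 leg with an adjacent type 2 leg, since both run vertically along the slit side. This keeps the total at three pieces alternating as required in (1). The descent length $\pi_3$ is bounded by $d_Z(x,y)$ whenever the other point lies outside that pillowcase, because any path leaving the pillowcase must descend to the base. Within a single face we use a type 3 segment at one height followed by a vertical move within the face, which is bounded by the Euclidean distance on the face.

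The remaining configurations follow by symmetry: $x$ and $y$ in different pillowcases, on opposite faces, or with $x$ to the right of $y$, which is excluded by the hypothesis $\pi_1(x)\le\pi_1(y)$.
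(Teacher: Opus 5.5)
There is a genuine gap, and it begins with the path types.

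\textbf{Misidentified path types and the piece count.} In this paper a type 1 path is a subpath of $\gamma_t$, whose image is $\pi_2^{-1}(t)$. It is the path that, at fixed $\pi_2$, climbs one face of a pillowcase and descends the other. A type 3 path is a subpath of $\theta_{A,t}$, which stays at fixed height $\pi_3=t$ and runs parallel to the slit. So the leg from a point in a face straight down to the base is type 1, not type 3. The same holds for the passage over the top edge between opposite faces. Such a leg cannot be ``merged with an adjacent type 2 leg''.

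With the correct types, your construction does not stay within three alternating pieces. Take a large slit $\{s\}\times[t_1,t_2]$ and put $x$, $y$ in small pillowcases on either side of it, with $\pi_1(x)<s<\pi_1(y)$ and $\pi_2(x),\pi_2(y)$ well inside $(t_1,t_2)$. Your detour for this blocked case is vertical--horizontal--vertical, of types $2,1,2$. Adding the type 1 descent from $x$ and the type 1 ascent to $y$ gives five pieces, contradicting (1) and the bound of three.

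The paper avoids this by going horizontally first:
\begin{itemize}
\item a type 1 leg from $x$ to the base of the large pillowcase $P$, which absorbs the descent;
\item the type 3 path $\theta_{P,0}$ around the nearer end of the slit, at height $0$;
\item a type 1 leg to $y$, which absorbs the ascent.
\end{itemize}
The middle leg is controlled by $\min\{\pi_2(x)+\pi_2(y)-2t_1,\,2t_2-\pi_2(x)-\pi_2(y)\}\le d_Z(x,y)$.

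\textbf{Property (4) is only sketched.} You leave the heart of (4) as a plan (``we would take the height at which a $Z$-geodesic passes the slit \ldots\ then we verify''). Two of the preparatory steps are wrong.
\begin{itemize}
\item Your criterion for choosing between the two L-shapes is vacuous. Both type 2 legs have $Z$-length $|\pi_2(x)-\pi_2(y)|\le d_Z(x,y)$, so the type 2 length never distinguishes them.
\item The claim that a crossed slit with $2^{-k}>2M$ blocks $x$ from $y$ is false. The slit's range may contain $\pi_2(y)$ but not $\pi_2(x)$, and then the other L-shape simply passes beside it.
\end{itemize}

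The missing argument has three steps.
\begin{enumerate}
\item Slits are horizontally separated by at least the smaller of their lengths. Hence at most one pillowcase of side at least $2M$ meets the rectangle $[\pi_1(x),\pi_1(y)]\times[\min\pi_2,\max\pi_2]$.
\item If that pillowcase does not separate $x$ from $y$, the type 1 legs at heights $\pi_2(x)$ and $\pi_2(y)$ cannot both cross a pillowcase of side greater than $2M$. So one of the two L-shapes works, possibly with a final type 1 leg into $y$'s pillowcase. This is the paper's Subcases 2b/2c.
\item Only the genuinely separating configuration, the paper's Case 1, needs the detour around the slit end described above.
\end{enumerate}
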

    \begin{proof}

    Observe first that any type 1 path $\alpha$ with $\pi_1(|\alpha|) \subset [ \pi_1(x),\pi_1(y)]$ intersects at most one pillowcase of side length $2(\pi_1(y) - \pi_1(x))$ or greater. Moreover, the region \[[\pi_1(x), \pi_1(y)] \times [\min\{ \pi_2(x),\pi_2(y)\}, \max\{\pi_2(x), \pi_2(y)\}]\] 
    intersects at most one pillowcase of side length $2M$ or greater. 
    
    We split into two main cases. See \Cref{fig:grid} for an illustration of the various cases in this proof.
    
    \subsubsection*{Case 1.} Assume there is a large pillowcase $P$ separating the points $x$ and $y$. More precisely, we assume there is a pillowcase $P$ with base slit $\{s\} \times [t_1,t_2]$ satisfying the three conditions (i) $\pi_1(x) \leq s \leq \pi_1(y)$; (ii) $t_1 \leq \pi_2(x) \leq t_2$ and $t_1 \leq \pi_2(y) \leq t_2$; (iii) $2(\pi_1(y) - \pi_1(x)) \leq t_2 - t_1$. At most one pillowcase $P$ can satisfy these conditions.

    We further observe in this case that 
    \begin{equation} \label{eq:d_Z_P}
        \min\{\pi_2(x) + \pi_2(y) - 2t_1, 2t_2 - \pi_2(x) - \pi_2(y)\} \leq d_Z(x,y).
    \end{equation}

    \begin{itemize}[left=9pt] \setlength{\itemsep}{6pt}
        \item[] \textit{Subcase 1a. Both $x$ and $y$ are not in $P$.} Let $\alpha_1$ be the type 1 path connecting $x$ to the base of $P$. Let $\alpha_2$ be a type 3 path connecting the terminal point of $\alpha_1$ to the nearer of the two endpoints of the slit $\{s\} \times [t_1,t_2]$ along the left side of the slit, then along the right side of the slit until reaching the point $w$ along the right side of the slit with $\pi_2(w) = \pi_2(y)$. Let $\alpha_3$ be a type 1 path connecting $w$ to $y$. 

        Each of the properties (1)-(4) are straightforward to check. Properties (1) and (2) are immediate from the construction. 
        Property (3) follows from \eqref{eq:d_Z_P}. The first part of property (4) is immediate from construction. The second part follows from the fact that $\alpha$ does not cross into the interior of $P$. Any other pillowcase crossed by $\alpha$ must have side length at most $2(\pi_1(y) - \pi_1(x))$ and so (a) is satisfied.
        \item[]  \textit{Subcase 1b. $x$ is not in $P$ but $y$ is in $P$.} Let $\alpha_1$ be the type 1 path connecting $x$ to the base of $P$, then continuing up along $P$ until reaching height $\pi_3(y)$. Denote the terminal point of $\alpha_1$ by $w$. Let $\alpha_2$ be the $d_Z$-geodesic from $w$ to $y$, which is a type 3 path. Observe that $d_Z(w,y) \leq d_Z(x,y)$, and so $\alpha_2$ satisfies (3). All the other properties are immediate. 
        \item[] \textit{Subcase 1c. $x$ is in $P$ but $y$ is not in $P$.} Follow the previous procedure with roles switched.
        \item[] \textit{Subcase 1d. Both $x,y$ are in $P$ and on the same side.} Let $\alpha_1$ be the type 3 path extending from $x$ until it reaches $\pi_2(y)$; let $\alpha_2$ be the type 1 path from the terminal point of $\alpha_1$ to $y$. Properties (1)-(4) are immediate.
        \item[] \textit{Subcase 1e. Both $x,y$ are in $P$ but on different sides.} Consider the $d_Z$-geodesic $\beta$ from $x$ to $y$. Note that $\beta$ crosses over the top of $P$ or one of the sides. In the first case, let $\alpha_1$ be the type 1 path from $x$ crossing over the top of the pillowcase then continuing down until reaching $\pi_3(y)$, ending at a point $w$; let $\alpha_2$ be the type 3 path from $w$ to $y$. In the second case, let $w$ be the point on the same side as $y$ satisfying $\pi_2(w) = \pi_2(y)$ and $\pi_3(w) = \pi_3(x)$ and let $\alpha_1$ be $d_Z$-geodesic from $x$ to $w$, which is a type 3 path. In both cases, we have $\ell(\alpha_1) = d_Z(x, w) \leq d_Z(x,y)$ and $\ell(\alpha_2) = d_Z(w,y) \leq d_Z(x,y)$, which verifies (3) and (4). Properties (1) and (2) are immediate.
        \end{itemize}

            \begin{figure} 
    \centering
        \hfill
    \subfloat[Case 1a]{
        \begin{tikzpicture}[scale=1]
            \draw[] (-1,2.5) to (-1.75,2.125); 
            \draw[] (-2.75,1.625 ) to (-3,1.5) to (0,0) to (3,1.5) to (1,2.5);
            \draw[dotted] (1, 2.5) to (0,3) to (-1, 2.5);
            \draw[dotted] (-1.75,2.125) to (-2.75,1.625);
            \draw[] (-1,2) .. controls (-.5,1.7) and (.5,1.2) .. (1.,1) to (1., 3.436) to (-1,4.436) to (-1,2);
            \draw[dotted] (1.,1) .. controls (.5,1.3) and (-.5,1.8) .. (-1., 2.);
            \draw[] (-2.75,1.625) .. controls (-2.5,1.45) and (-2,1.2) .. (-1.75,1.125) to (-1.75, 2.245) to (-2.75, 2.745) to (-2.75, 1.625);
            \draw[dotted] (-2.75,1.625) .. controls (-2.5,1.55) and (-2,1.3) .. (-1.75,1.125);
            \draw[blue, very thick] (-2,2.2) to (-2,1.3) to (-.85,1.875) to (-1.05,1.975) to (-.95,2.025) to (.4,1.315) to (1.4,1.815);
            \filldraw [blue] (-2,2.2) circle (2pt) node[anchor=east]{$x$};
            \filldraw [blue] (1.4,1.815) circle (2pt) node[anchor=north]{$y$};
        \end{tikzpicture}}
    \hfill
    \subfloat[Cases 1b and 1c]{
        \begin{tikzpicture}[scale=1]
            \draw[] (-1,2.5) to (-1.75,2.125); 
            \draw[] (-2.75,1.625 ) to (-3,1.5) to (0,0) to (3,1.5) to (1,2.5);
            \draw[dotted] (1, 2.5) to (0,3) to (-1, 2.5);
            \draw[dotted] (-1.75,2.125) to (-2.75,1.625);
            \draw[] (-1,2) .. controls (-.5,1.7) and (.5,1.2) .. (1.,1) to (1., 3.436) to (-1,4.436) to (-1,2);
            \draw[dotted] (1.,1) .. controls (.5,1.3) and (-.5,1.8) .. (-1., 2.);
            \draw[] (-2.75,1.625) .. controls (-2.5,1.45) and (-2,1.2) .. (-1.75,1.125) to (-1.75, 2.245) to (-2.75, 2.745) to (-2.75, 1.625);
            \draw[dotted] (-2.75,1.625) .. controls (-2.5,1.55) and (-2,1.3) .. (-1.75,1.125);
            \draw[blue, very thick] (-2,2.2) to (-2,1.3) to (-.85,1.875) to (-.85,3.875) to (-1.05,3.975) to (-.95,4.025) to (.4,3.315);
            \filldraw [blue] (-2,2.2) circle (2pt) node[anchor=east]{$x$};
            \filldraw [blue] (.4,3.315) circle (2pt) node[anchor=west]{$y$};
        \end{tikzpicture}}
    \hfill 

    \hfill 
    \subfloat[Cases 1d and 1e]{
        \begin{tikzpicture}[scale=1]
            \draw[] (-1,2.5) to (-1.75,2.125); 
            \draw[] (-2.75,1.625 ) to (-3,1.5) to (0,0) to (3,1.5) to (1,2.5);
            \draw[dotted] (1, 2.5) to (0,3) to (-1, 2.5);
            \draw[dotted] (-1.75,2.125) to (-2.75,1.625);
            \draw[] (-1,2) .. controls (-.5,1.7) and (.5,1.2) .. (1.,1) to (1., 3.436) to (-1,4.436) to (-1,2);
            \draw[dotted] (1.,1) .. controls (.5,1.3) and (-.5,1.8) .. (-1., 2.);
            \draw[] (-2.75,1.625) .. controls (-2.5,1.45) and (-2,1.2) .. (-1.75,1.125) to (-1.75, 2.245) to (-2.75, 2.745) to (-2.75, 1.625);
            \draw[dotted] (-2.75,1.625) .. controls (-2.5,1.55) and (-2,1.3) .. (-1.75,1.125);
            \draw[blue, very thick] (-.75,2.5) to (-.25,2.25) to (-.25, 3.75);
            \filldraw [blue] (-.75,2.5) circle (2pt) node[anchor=north]{$x$};
            \filldraw [blue] (-.25, 3.75) circle (2pt) node[anchor=east]{$y$};
            \draw[blue, very thick] (.25,1.8) to (.25,3.786) to (.35,3.836) to (.35,3.5) to (.65, 3.35);
            \filldraw [blue] (.25,1.8) circle (2pt) node[anchor=west]{$x$};
            \filldraw [blue] (.65, 3.35) circle (2pt) node[anchor=north]{$y$};
        \end{tikzpicture}}
    \hfill
    \subfloat[Case 2a]{
        \begin{tikzpicture}[scale=1]
            \draw[] (-1,2.5) to (-1.75,2.125); 
            \draw[] (-2.75,1.625 ) to (-3,1.5) to (0,0) to (3,1.5) to (1,2.5);
            \draw[dotted] (1, 2.5) to (0,3) to (-1, 2.5);
            \draw[dotted] (-1.75,2.125) to (-2.75,1.625);
            \draw[] (-1,2) .. controls (-.5,1.7) and (.5,1.2) .. (1.,1) to (1., 3.436) to (-1,4.436) to (-1,2);
            \draw[dotted] (1.,1) .. controls (.5,1.3) and (-.5,1.8) .. (-1., 2.);
            \draw[] (-2.75,1.625) .. controls (-2.5,1.45) and (-2,1.2) .. (-1.75,1.125) to (-1.75, 2.245) to (-2.75, 2.745) to (-2.75, 1.625);
            \draw[dotted] (-2.75,1.625) .. controls (-2.5,1.55) and (-2,1.3) .. (-1.75,1.125);
            \draw[blue, very thick] (-2,2.2) to (-2,1.3) to (-.85,1.875) to (1.25,.825);
            \filldraw [blue] (-2,2.2) circle (2pt) node[anchor=east]{$x$};
            \filldraw [blue] (1.25,.825) circle (2pt) node[anchor=south]{$y$};
        \end{tikzpicture}}
        \hfill

            \hfill 
    \subfloat[Case 2b]{
        \begin{tikzpicture}[scale=1]
            \draw[] (-1,2.5) to (-1.75,2.125); 
            \draw[] (-2.75,1.625 ) to (-3,1.5) to (0,0) to (3,1.5) to (1,2.5);
            \draw[dotted] (1, 2.5) to (0,3) to (-1, 2.5);
            \draw[dotted] (-1.75,2.125) to (-2.75,1.625);
            \draw[] (-1,2) .. controls (-.5,1.7) and (.5,1.2) .. (1.,1) to (1., 3.436) to (-1,4.436) to (-1,2);
            \draw[dotted] (1.,1) .. controls (.5,1.3) and (-.5,1.8) .. (-1., 2.);
            \draw[] (-2.75,1.625) .. controls (-2.5,1.45) and (-2,1.2) .. (-1.75,1.125) to (-1.75, 2.245) to (-2.75, 2.745) to (-2.75, 1.625);
            \draw[dotted] (-2.75,1.625) .. controls (-2.5,1.55) and (-2,1.3) .. (-1.75,1.125);
            \draw[blue, very thick] (-2.5,2.3) to (-2.5,1.55) to (-1.35,2.125) to (.25,1.325) to (.25,3.2);
            \filldraw [blue] (-2.5,2.3) circle (2pt) node[anchor=west]{$x$};
            \filldraw [blue] (.25,3.2) circle (2pt) node[anchor=west]{$y$};
        \end{tikzpicture}}
    \hfill
    \subfloat[Case 2c]{
        \begin{tikzpicture}[scale=1]
            \draw[] (-1,2.5) to (-1.75,2.125); 
            \draw[] (-2.75,1.625 ) to (-3,1.5) to (0,0) to (3,1.5) to (1,2.5);
            \draw[dotted] (1, 2.5) to (0,3) to (-1, 2.5);
            \draw[dotted] (-1.75,2.125) to (-2.75,1.625);
            \draw[] (-1,2) .. controls (-.5,1.7) and (.5,1.2) .. (1.,1) to (1., 3.436) to (-1,4.436) to (-1,2);
            \draw[dotted] (1.,1) .. controls (.5,1.3) and (-.5,1.8) .. (-1., 2.);
            \draw[] (-2.75,1.625) .. controls (-2.5,1.45) and (-2,1.2) .. (-1.75,1.125) to (-1.75, 2.245) to (-2.75, 2.745) to (-2.75, 1.625);
            \draw[dotted] (-2.75,1.625) .. controls (-2.5,1.55) and (-2,1.3) .. (-1.75,1.125);
            \draw[red] (-2,1.3) to (-.85,1.875) to (-.85,4.35) to (-.75, 4.4) to (-.75,1.925) to (-.15,2.225);
            \filldraw[red] (-.75,3.25) circle (0pt) node[anchor=west]{$\alpha'$};
            \draw[blue, very thick] (-2,2.2) to (-2,1.3) to (.05,.275) to (1.9,1.2);
            \filldraw [blue] (-2,2.2) circle (2pt) node[anchor=east]{$x$};
            \filldraw [blue] (1.9,1.2) circle (2pt) node[anchor=south]{$y$};
        \end{tikzpicture}}
        \hfill
    \caption{Schematic diagram of the various cases of grid paths in \Cref{lemm:grid_path}} \label{fig:grid}
    \end{figure}

    \subsubsection*{Case 2.} Assume there is no such pillowcase as in the first case. We let $\alpha'$ be the type 1 path extending rightward from $x$ until reaching a point $w_1$ with $\pi_1(w_1) = \pi_1(y)$ and $\pi_3(w_1) = 0$.
    \begin{itemize}[left=9pt] \setlength{\itemsep}{6pt}
        \item[] \textit{Subcase 2a.} Assume that the terminal point of $\alpha'$ lies on the base of a pillowcase $P$ of side length greater than $2M$. Let $\alpha_1 = \alpha'$; observe that $\alpha_1$ ends at the base of $P$ and does not intersect any other pillowcase of side length greater than $2M$. The assumptions imply that $y$ does not belong to any pillowcase. Let $\alpha_2$ be a path of type 2 that extends from $w_1$ until reaching the point $y$. 
        \item[] \textit{Subcase 2b.} Assume that $\alpha'$ does not cross over the top of, or end at, a pillowcase with side length greater than $2M$. If $y$ belongs to the interior of a pillowcase $P'$, let $\alpha_3$ be the type 1 path connecting $y$ to the nearer of the two base sides of $P'$. If $\alpha_3$ ends on the left side of $P'$, then let $\alpha_1 = \alpha'$ and let $\alpha_2$ be the type 2 path connecting the endpoints of $\alpha_1$ and $\alpha_3$. On the other hand, if $\alpha_3$ ends on the right side of $P'$ and $\alpha'$ ends at the base of a pillowcase $P$, then let $\alpha_1$ be the extension of $\alpha'$ crossing over the pillowcase $P$ until reaching the base on the right side. Note that, by assumption, $P$ has side length at most $2M$. If $\alpha'$ does not end on a pillowcase, then take $\alpha_1 = \alpha'$. Again let $\alpha_2$ be the type 2 path connecting the endpoints of $\alpha_1$ and $\alpha_3$. Properties (1)-(3) are immediate. Regarding (4), it is possible that $x$ (resp. $y$) belongs to a pillowcase $P''$ of side length greater than $2M$, in which case $\alpha_1$ (resp. $\alpha_3$) crosses through $P''$ and the bound in (b) applies. Otherwise, (a) is valid by assumption. 

        If $y$ does not belong to the interior of a pillowcase, follow the same procedure without $\alpha_3$.
        \item[] \textit{Subcase 2c.} If the condition of the previous subcase is not satisfied, then let $\alpha_1$ be the type 1 path extending leftward from $y$ until reaching a point $w_1$ with $\pi_1(w_1) = \pi_1(x)$. By assumption, $\alpha'$ passes through a pillowcase $P$ with side length greater than $2M$. The existence of $P$ precludes the existence of a pillowcase $P'$ with side length greater than $2M$ through which $\alpha_1$ passes. Define $\alpha_2, \alpha_3$ as in the previous case with roles reversed. \qed
    \end{itemize}
    \renewcommand{\qedsymbol}{}
    \end{proof}

    

\subsection{Proof of the Loewner property}
    
    The proof of \Cref{thm:main} will be complete once we establish the following. 

    \begin{prop} \label{prop:loewner}
        The space $X$ is Loewner. 
    \end{prop}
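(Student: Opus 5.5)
We show that $\Mod \Gamma(E,F)$ is bounded below in terms of $\triangle_X(E,F)$. By the modulus-preservation lemma we may compute modulus in $\widetilde{X}$. By the remark on relative distance, it suffices to bound $\triangle_Z(E,F)$ above by some $t'$. We treat the main case $E,F\subset X^+$; continua meeting $X^-$, which is a flat square glued along $\partial[0,1]^2$, are handled the same way using \Cref{thm:qs}(5) and the standard Euclidean estimate on the square.

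The plan is to build a family $\Gamma$ of connecting paths indexed by a parameter set of measure comparable to $2^{-k}$. Each path will have $\widetilde X$-length at most $C(t')2^{-k}$, and the family will overlap with bounded multiplicity. The usual Cauchy--Schwarz argument then gives a lower bound on modulus: if $\rho$ is admissible, integrate $\int_\gamma \rho\ge 1$ over the parameter, apply Cauchy--Schwarz, and use the bounded multiplicity together with the fact that $\widetilde X$ is locally flat off the collapsed sets $S(A)$. This yields $\int\rho^2\gtrsim 1$.

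\emph{Step 1: anchoring families.} Normalize so that $\min\{\diam_Z E,\diam_Z F\}\ge 2^{-k}$ and $d_Z(E,F)\le t'2^{-k+1}$. Apply \Cref{lemm:diam} to $E$ and to $F$. This gives level $k+5$ families $\Lambda_E[s]$ and $\Lambda_F[s']$ whose $2^{-k-2}$-extensions meet $E$ and $F$, respectively. Pick $x\in E$ and $y\in F$ with $d_Z(x,y)$ nearly minimal. Move $x$ and $y$ to $(k+5)$-dyadic base points of these families at distance $\lesssim 2^{-k}$, and connect them by the grid path $\alpha_{x,y}$ of \Cref{lemm:grid_path}. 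This path has at most three coordinate pieces with $j$-dyadic junctions.

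\emph{Step 2: thickening.} Thicken each coordinate piece $\alpha_i$ into a level $n$ path family, with $n\approx k+5+\log_2 t'$, consisting of parallel translates of type $1$, $2$ or $3$. At each junction, insert short connecting families: the $\varepsilon$-extensions of \Cref{defi:extension} serve exactly to turn corners, including around a pillowcase of size $\ge 2\varepsilon$. A path is retained in $\Gamma$ only if it is concatenated from members of the thickened families that all match a common parameter $t$ of the anchoring family. Types $2$ and $3$ have $\widetilde X$-length at most their $Z$-length, which is $\le 2d_Z(x,y)+O(2^{-k})$ by \Cref{lemm:grid_path}(3).

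\emph{Step 3: type 1 pieces (main obstacle).} Here the $Z$-length may be huge, because a horizontal path crosses many pillowcases. Pillowcases that are large relative to $M$ are excluded by \Cref{lemm:grid_path}(4): either their size is $\le 2M$, or the crossing has length $\le d_Z(x,y)$. All remaining pillowcases are controlled by \Cref{lemm:type_1_estimate}: a $\frac14$-proportion of the parallel paths have $\widetilde X$-length $\lesssim (n+\log_2(s_2-s_1))(s_2-s_1)$, which is $\lesssim C(t')2^{-k}$ because $s_2-s_1\approx M\lesssim t'2^{-k}$. Since there are at most three pieces, each retaining $\ge\frac14$ of its parameters, and the parameter sets are aligned through a common index, a $4^{-3}$-proportion survives by a union/measure argument.

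The delicate points are the following.
\begin{enumerate}
\item The good parameter sets for different pieces must be compared through a common index. This works because all pieces are level $n$ families with the same dyadic width; if needed, pass to subfamilies.
\item The extensions at junctions must still meet $E$ and $F$, which is what \Cref{lemm:diam} guarantees.
\end{enumerate}

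Assembling these, $\Mod\Gamma(E,F)\ge \Mod\Gamma\ge c(t')>0$, and we take $\varphi(t)=c(t')$.
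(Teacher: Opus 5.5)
Your overall architecture matches the paper's: anchoring families from \Cref{lemm:diam}, a grid path from \Cref{lemm:grid_path}, thickened into level-$n$ families joined by extensions, \Cref{lemm:type_1_estimate} for the type 1 pieces, and Cauchy--Schwarz. The step that fails is where you assemble the pieces into a single family of short connecting paths indexed by a common parameter.

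Each type 1 family is only known to be short on a $\tfrac14$-proportion of its parameters. There can be several such families: the anchoring families when they are type 1 extensions, and up to two pieces of the grid path, as in Subcase 2b. Under a fixed common index these good sets are essentially unrelated subsets of $[0,2^{-n}]$. They depend on which small pillowcases, and in which $T_q$, horizontal bands at different heights happen to meet. So they can be pairwise disjoint. A union bound would need each bad set to have proportion below $1/m$ when $m$ pieces are involved, and your figure $4^{-3}$ tacitly assumes independence. Equal dyadic width gives neither. The set of indices where the whole concatenated path has $\widetilde{X}$-length $\lesssim C(t')2^{-k}$ can therefore be empty, and your Cauchy--Schwarz step has nothing to integrate over.

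The missing idea is the one used in the paper's final lemma. Build the thickened families so that every path of $\Lambda_{i-1}$ meets every path of $\Lambda_i$; this is what the extensions are for. Then never form a concatenated family at all. Given an admissible $\rho$, some single $\Lambda_i$, out of at most five, must satisfy $\int_\gamma\rho\,ds\ge 1/5$ for \emph{every} $\gamma\in\Lambda_i$. Otherwise, one path with $\int\rho<1/5$ from each family could be concatenated into a path of $\Gamma(E,F)$ with $\int\rho<1$. Cauchy--Schwarz is then applied only to the good $\tfrac14$-proportion of that one rectangular family, giving $\int\rho^2\gtrsim\nu(t)^{-1}$ with no need for simultaneous goodness.

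Alternatively, the all-pairs intersection property would let you repair your own approach. Pair indices through measure-preserving rearrangements that send a common set of measure $2^{-n}/4$ into each family's good set; fixed alignment of indices does not achieve this.

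A secondary omission: \Cref{lemm:grid_path}(4) controls only $\alpha_i$ itself. Parallel translates can cross a large pillowcase at whose slit endpoint $\alpha_i$ sits, so you must choose on which side of $\alpha_i$ to thicken, as is done in the proof of \Cref{lemm:chain}.
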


    We now fix a value $t>0$. Consider two disjoint nondegenerate continua $E,F \subset X$ with $\triangle_Z(E,F) \leq t$. Assume for convenience that $E,F$ both belong to $X^+$. 
    
    We set parameters $j,k \in \mathbb{Z}$ to satisfy
    \[2^{-k-1} < d_Z(E,F) \leq 2^{-k}. \]
    and 
    \begin{equation*} \label{eq:diam}
       2^{-j-1} < \min\{\diam_Z(E), \diam_Z(F)\} \leq 2^{-j}. 
    \end{equation*}

    Observe that $j$ and $k$ are related by $2^{-k} \leq t 2^{-j+1}$. For convenience, we restrict the size of $E$ and $F$ as follows. Let $x' \in E$ and $y' \in F$ be points satisfying $d(x',y') = d(E,F)$. The set $\overline{B_Z(x',2^{-j-1})} \cap E$ has a component of diameter at least $2^{-j-1}$ containing $x'$, and similarly for $y'$. Replace $E$ and $F$ with these respective components, which we still denote by $E$ and $F$, observing that $d_Z(E,F)$ is unchanged. We now have that $2^{-j-1} \leq \diam_Z(E) \leq 2^{-j}$ and $2^{-j-1} \leq \diam_Z(F) \leq 2^{-j}$.
    



    \begin{lemm} \label{lemm:chain}
        There exist a chain of path families $\Lambda_0, \Lambda_1, \ldots, \Lambda_{I+1}$ of level $j+7$, $I \leq 3$, where each path in $\Lambda_1$ intersects $E$, each path in $\Lambda_{I+1}$ intersects $F$, and for all $1 \leq i \leq I+1$ each path from $\Lambda_{i-1}$ intersects each path from $\Lambda_{i}$. Moreover, there is a function $\nu\colon (0,\infty) \to (0, \infty)$, independent of $E$ and $F$,  such that $\frac{1}{4}$-proportion of each path family $\Lambda_i$ has $\widetilde{X}$-length at most $\nu(t)2^{-j}$.
    \end{lemm}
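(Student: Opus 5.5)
Apply \Cref{lemm:diam} once to $E$ and once to $F$, join the two resulting families by a grid path between dyadic points near $E$ and $F$, and fatten each piece of that grid path into a level $j+7$ family. \Cref{lemm:type_1_estimate} and the $Z$-length bounds of \Cref{lemm:grid_path} then control the $\widetilde{X}$-lengths.

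\emph{Step 1: families hitting $E$ and $F$.} Since $\diam_Z(E)\geq 2^{-j-1}$, \Cref{lemm:diam} with $k=j+1$ gives a $(j+5)$-dyadic number $s_E$ and a level $j+6$ family $\Lambda^E[s_E]$. Every path in its $2^{-j-3}$-extension meets $E$. We do the same for $F$ and obtain $\Lambda^F[s_F]$. Each family of level $j+6$ splits into two subfamilies of level $j+7$, and we keep one of them. We take $\Lambda_1$ to be this subfamily of the extension for $E$, and $\Lambda_{I+1}$ the analogous one for $F$. Every path in $\Lambda_1$ has $Z$-length comparable to $2^{-j}$, since it is an extension by $2^{-j-3}$ of a short segment. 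Type 2 and type 3 paths avoid pillowcases, or lie on a single face where $\widetilde{X}$-length is at most Euclidean length. For type 1 paths, \Cref{lemm:type_1_estimate} with $n=j+7$ and $s_2-s_1\approx 2^{-j-2}$ gives an $\widetilde{X}$-length bound $C\cdot 2^{-j}$ on a $\tfrac14$-proportion, off the large pillowcases in $\mathcal{P}$. By the extension rule in \Cref{defi:extension}, a type 1 extension only climbs a large pillowcase $P$ by length $\varepsilon$. Also $\widetilde{X}$-length inside a single face is at most its $Z$-length, which gives the bound there.

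\emph{Step 2: connecting via a grid path.} We pick $(j+7)$-dyadic points $x\in|\Lambda_1|$ and $y\in|\Lambda_{I+1}|$, chosen near the points of $E$ and $F$ realizing $d_Z(E,F)$. Then $d_Z(x,y)\lesssim 2^{-k}+2^{-j}\lesssim (t+1)2^{-j}$, using $2^{-k}\le t2^{-j+1}$. Apply \Cref{lemm:grid_path} to get $\alpha_{x,y}=\alpha_1*\alpha_2*\alpha_3$, with intermediate endpoints $(j+7)$-dyadic by property (2). Each $\alpha_i$ is in a coordinate direction, so it lies in one path of some level $j+7$ family, and we let $\Lambda_{i+1}$ be that family restricted to the $\pi$-range of $\alpha_i$. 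The starting family $\Lambda_0$ is taken to be the "transverse" family through $x$, meaning a family of the other coordinate type, so that consecutive families cross. The construction works because paths of type 1 cross those of type 2 or 3 (property (1) gives the alternation), and the dyadic endpoints guarantee that the whole $2^{-j-7}$-thick families cross, not just the single paths. This gives $I\le 3$.

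\emph{Step 3: length bounds.} For type 2 and type 3 pieces, property (3) gives $Z$-length at most $2d_Z(x,y)\lesssim(t+1)2^{-j}$ for all nearby parallel paths. For type 1 pieces we combine property (4) with \Cref{lemm:type_1_estimate}.
\begin{itemize}
\item Pillowcases in $\mathcal{P}$ of size at most $2M$ are crossed with $Z$-length $\lesssim M\lesssim(t+1)2^{-j}$, by alternative (a). There are at most boundedly many of them.
\item Any other pillowcase is entered only with $Z$-length at most $d_Z(x,y)$, by alternative (b).
\end{itemize}
The off-$\mathcal{P}$ part is bounded by $2(j+7+\log_2(s_2-s_1)+3)(s_2-s_1)$. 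Since $s_2-s_1\le M\lesssim (t+1)2^{-j}$, this is at most $C(1+\log(1+t))(t+1)2^{-j}$. Together these give $\nu(t)$.

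\emph{Main obstacle.} The hard part is Step 2. We must ensure that whole families, not just the grid path, intersect pairwise, and that the families adjacent to a large separating pillowcase still cross when parallel paths switch slit sides. This is what properties (2) and (3), with their same-side requirement, are designed for, and I would check it case by case following the subcases of \Cref{lemm:grid_path}.
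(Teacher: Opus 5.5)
There is a genuine gap, and it sits in Step 2. Your outline matches the paper's: \Cref{lemm:diam} for $E$ and $F$, a grid path between dyadic points, one level $j+7$ family per piece, and \Cref{lemm:type_1_estimate} for type 1 lengths. But the claim that dyadic endpoints make the whole $2^{-j-7}$-thick families cross is false. If each family is cut off at the exact $\pi$-range of $\alpha_i$, consecutive families can miss each other whichever of the two neighbouring families you pick. In Subcase 1a of \Cref{lemm:grid_path}, $\alpha_1$ stops at the base of $P$ and $\alpha_2$ runs along the slit at $\pi_3=0$. The type 3 family around $\alpha_2$ consists of the loops $\theta_{A,\tau}$, $0\le\tau\le 2^{-j-7}$, and for $\tau>0$ these lie strictly above the base. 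So they meet no path of a type 1 family that ends at the base. Similarly, in Subcase 2b a type 2 family running beside a slit and a type 1 family confined to $P'$ meet only along the slit. Likewise, a first grid family started at an arbitrary point $x\in|\Lambda_1|$ need not meet every path of the $E$-family. The missing device is the paper's $2^{-j}$-extension (\Cref{defi:extension}) of each grid family. It lengthens every path by far more than the family width $2^{-j-7}$, with $x,y$ taken as the center points of the constant families from \Cref{lemm:diam} and one half of each kept. You also need to handle the case where the $E$-family has the same type as $\alpha_1$, since parallel families do not cross. The paper simply drops that family, because the extended family along $\alpha_1$ already meets $E$. Your extra transverse $\Lambda_0$ plays no role; the statement's ``$\Lambda_1$ intersects $E$'' should be read as $\Lambda_0$, as in the paper's proof.

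Step 3 has a related hole. Property (4) of \Cref{lemm:grid_path} controls only the single path $\alpha_i$, yet you apply alternatives (a) and (b) to every path of the family. Slit endpoints are dyadic, so $\alpha_i$ can run at exactly the height of an endpoint of the slit of a large pillowcase $P$. The paths of one of the two candidate families then cross over all of $P$. On such a $P$ the parameter $\beta$ is not small, and a strip of width $2^{-j-7}$ is far too thin for \Cref{lemm:short_paths} to apply. Even for a good path, the only available bound inside $P$ is $\beta$ times the side length of $P$, which is not $O(2^{-j})$. This is why the paper fixes the side by explicit rules:
\begin{itemize}
\item if $\alpha_i$ passes an endpoint of the slit of $P$, use the family that avoids crossing over $P$;
\item if $\alpha_i$ ends at the base of $P$ and is followed by a type 2 path, use the family that does not extend into $P$;
\item if it is followed by a type 3 path in $P$, use the family that does extend into $P$.
\end{itemize}
With these rules, a large pillowcase is either met only in the extension portion, where the extension climbs at most $2^{-j}$, or is crossed by $\alpha_i$ itself, so (4) applies. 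The extensions also add $2\cdot 2^{-j}$ to your type 2 and type 3 bounds. A minor point: type 1 paths do not have $Z$-length comparable to $2^{-j}$, since they cross infinitely many small pillowcases. For them only the $\widetilde{X}$-bound from \Cref{lemm:type_1_estimate} is meaningful, which you do use afterwards.
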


    
    \begin{proof}
    By \Cref{lemm:diam}, there is a $(j+5)$-dyadic number $s_0$ and level $j+6$ path family $\Lambda_0'[s_0]$ such that each path in the $2^{-k-3}$-extension of $\Lambda_0'[s_0]$ intersects $E$. Likewise, there is a $(j+5)$-dyadic number $s_{I+1}$ and level $j+6$ path family $\Lambda_{I+1}'[s_{I+1}]$ such that the same holds for the continuum $F$. Let $x$ be the $(j+7)$-dyadic point in the center of $\Lambda_0'[s_0]$ and let $y$ be the $(j+7)$ dyadic point in the center of $\Lambda_{I+1}'[s_{I+1}]$. For $i =0,I+1$, let $\Lambda_i$ be the $2^{-k-3}$-extension of either half of $\Lambda_i'[s_i]$ as separated by $x$ or $y$. Note that $\Lambda_0$ and $\Lambda_{I+1}$ are level $j+7$ path families. Let $\alpha = \alpha_1 * \cdots * \alpha_I$ ($I \leq 3$) be a grid path connecting $x$ and $y$ as given by \Cref{lemm:grid_path}. 
    Observe that 
    \[d_Z(x,y) \leq d_Z(E,F) + \diam_Z(E) + \diam_Z(F) \leq d_Z(E,F) + 2^{-j+1} \leq (2t+2)2^{-j}.\]


    Suppose that $\Lambda_0$ is of type $1$. If the left-side extension encounters a large pillowcase $P$ as in \Cref{defi:extension}, then the right-side extension does not encounter any such pillowcase. Then any path $\gamma \in \Lambda_0$ satisfies $[\pi_1(x), \pi_1(x) + 2^{-j-3}] \subset \pi_1(\gamma)$. The same applies in reverse if the right-side extension encounters a large pillowcase $P$ as in \Cref{defi:extension}.


    Next, for each path $\alpha_i$ comprising $\alpha$, $1 \leq i \leq I$, we define a corresponding level $j+7$ path family $\Lambda_{i}$ running in the same direction as $\alpha_i$ as follows. Recall that the endpoints of $\alpha_i$ are $(j+7)$-dyadic by \Cref{lemm:grid_path}. In the first case, suppose that $\alpha_i$ has type $1$. Then $\alpha_i = \beta_1 * \gamma_t[s_1,s_2] * \beta_2$ for some $t = m_i2^{-j-7}$, $m_i \in \mathbb{N}$, and $s_1 < s_2$, where $\beta_1$ and $\beta_2$ are either contained in a pillowcase or constant. In particular, $\alpha_i$ belongs to both the path families $\Gamma_{m_i-1}^{j+7}[s_1,s_2]$ and $\Gamma_{m_i}^{j+7}[s_1,s_2]$. We take $\Lambda_i$ to be the $2^{-j}$-extension of one of these families, chosen according to the following rule. Note that $\alpha_i$ intersects at most one pillowcase $P$ of side length $2|s_2-s_1|$ or greater. By construction, $\alpha_i$ does not cross over any such pillowcase, so $\alpha_i$ can either intersect an endpoint of the base slit of $P$ or end in the interior of $P$. If $\alpha_i$ intersects an endpoint of the base of $P$ in its interior, choose $\Lambda_i$ to avoid crossing over $P$. If $\alpha_i$ ends (resp. begins) at an endpoint of the base of $P$ and is followed (resp. preceded) by a type $2$ path, then choose $\Lambda_i$ so its paths do not extend into $P$. If $\alpha_i$ ends in $P$ and is followed by a type $3$ path, then choose $\Lambda_i$ so its paths do extend into $P$. In all other situations $\Lambda_i$ can be chosen arbitrarily. Our choice of $\Lambda_i$ guarantees that its paths extend to the right past $s_2 + 2^{-j}$ unless $\alpha_i$ is followed by a type 3 path in $P$, and extend to the left past $s_2 - 2^{-j}$ unless $\alpha_i$ is preceded by a type 3 path.

    In the second case, suppose that $\alpha_i$ has type $2$. Then $\alpha_i = \widetilde{\gamma}_t[s_1,s_2]$ for some $t = m_i2^{-j-7}$ and $s_1 < s_2$. In particular, $\alpha_i$ belongs to both the path families $\widetilde{\Gamma}_{m_i-1}^{j+7}[t_1,t_2]$ and $\widetilde{\Gamma}_{m_i}^{j+7}[t_1,t_2]$. We define $\Lambda_i$ to be the $2^{-j}$-extension of one of these families, chosen according to the following rule. If an endpoint of $\alpha_i$ lies on the left side of a slit, then take $\widetilde{\Gamma}_{m_i-1}^{j+7}[t_1,t_2]$. If an endpoint of $\alpha_i$ lies on the right side of a slit, then take $\widetilde{\Gamma}_{m_i}^{j+7}[t_1,t_2]$. Note from \Cref{lemm:grid_path} that if both endpoints lie on a slit, they lie on the same side, so this choice is well-defined. 

    In the third case, suppose that $\alpha_i$ has type $3$. Then $\alpha_i = \theta_{A,t}[s_1^\pm,s_2^\pm]$ for some $t = m_i2^{-j-7}$ and $s_1,s_2$. Here, we simply take $\Lambda_i$ to be the $2^{-j}$-extension of $\Theta_{A,m_i}^{j+7}[s_1^\pm, s_2^\pm]$. 
    
    In all cases, each path in $\Lambda_{i-1}$ intersects all the paths in $\Lambda_{i}$.
    



    Next, we consider the possibility that $\Lambda_0$ and $\alpha_1$ have the same type. In this case, each path in $\Lambda_1$ intersects the set $E$, and so we omit the path family $\Lambda_0$ from the construction and adjust the indices accordingly. We do the same if $\Lambda_{I+1}$ and $\alpha_I$ have the same type. 

    To complete the proof we show that $\frac{1}{4}$-proportion of the paths in $\Lambda_i$ satisfy the required bound on $\widetilde{X}$-length. 
    If $\Lambda_i$ is a path family of type 2 or type 3, then by \Cref{lemm:grid_path}(3) each path $\gamma \in \Lambda_i$ satisfies
    \[\ell_Z(\gamma) \leq 2d_Z(x,y) +2\cdot 2^{-j} \leq 2d_Z(E,F) + 4\cdot 2^{-j} \leq (4t + 4)2^{-j}.\]

    If $\Lambda_i$ has type 1, we apply \Cref{lemm:type_1_estimate} as follows. Recall that there is at most one pillowcase $P'$ of side length $s_2-s_1 + 2^{-j+1}$ or greater that is crossed into by paths in $\Lambda_i$, which is removed from the inequality in \Cref{lemm:type_1_estimate}. By construction, if such a pillowcase $P'$ occurs in the $2^{-j}$-extension portion of $\Lambda_i$ then it must satisfy $\ell_{\widetilde{X}}(|\gamma| \cap P') \leq 2^{-j+1}$. Otherwise, $P'$ must occur in the region $\pi_1^{-1}[s_1,s_2]$, and according to (4) of \Cref{lemm:grid_path} must have side length at most 
    \[2\max\{s_2-s_1, |\pi_2(y) - \pi_2(x)|\} \leq 2d_Z(x,y) \leq (4t+4)2^{-j}.\] 
    Thus for any path $\gamma \in \Lambda_i$, $|\gamma| \cap P'$ has length at most $(4t+4)2^{-j}$. Combining this bound with \Cref{lemm:type_1_estimate}, we have for $\frac{1}{4}$-proportion of $\gamma \in \Lambda_i$
    \[\ell_{\widetilde{X}}(\gamma) \leq 2(j+7 + \log_2(s_2-s_1+2^{-j+1})+3)(s_2-s_1) + (4t+4)2^{-j}.\]
    Recall that $s_2-s_1 \leq d_Z(x,y) \leq (2t+2)2^{-j}$. Thus \[\log_2(s_2-s_1+2^{-j+1}) \leq \log_2((2t+4)2^{-j})\leq \log_2 (2t+4) -j.\] It follows that for $\frac{1}{4}$-proportion of $\gamma \in \Lambda_i$
    \[\ell_{\widetilde{X}}(\gamma) \leq 2(10 + \log_2(2t+4)(2t+2)2^{-j} + (4t+4) 2^{-j}.\]
    Letting $\nu(t) = 2(10 + \log_2(2 t+4)(2t+2) + 4t+4$ gives the lemma. 
    \end{proof}

        We say that a family of paths $\Gamma = \{\gamma_t: a \leq t \leq b\}$ is \textit{rectangular} if every Borel subset $B \subset [a,b]$ satisfies
    \[ \int \rho \chi_{\Gamma(B)}\,d\mathcal{H}^2 = \int_B \int_{\gamma_t} \rho\,ds\,d\mathcal{L}^1\]
    for all Borel functions $\rho$, where $\Gamma(B)$ is the trace of $\Gamma_B = \{\gamma_t: t \in B\}$. It is easy to see that all level $n$ path families are rectangular in the metric $d_{\widetilde{X}}$. This follows from the fact that these path families foliate their trace in a given pillowcase $A$ in the standard way and the metric $d_{\widetilde{X}}$ is locally Euclidean outside $S(A)$. The following lemma will complete the proof of \Cref{prop:loewner}.

    \begin{lemm}
        For the continua $E,F$ as defined, $\Mod \Gamma(E,F) \geq \varphi(t)$.
    \end{lemm}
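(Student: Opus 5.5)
We work in $\widetilde{X}$, since the change of metric $\widetilde{\iota}$ preserves conformal modulus. The plan is a lower bound by duality: take an arbitrary Borel $\rho$ admissible for $\Gamma(E,F)$ in $\widetilde{X}$ and show $\int \rho^2\,d\mathcal{H}^2 \geq \varphi(t)$. The paths used are concatenations of paths from the chain $\Lambda_0,\ldots,\Lambda_{I+1}$ of \Cref{lemm:chain}. Each family $\Lambda_i$ is a level $j+7$ family indexed by a parameter interval of length $2^{-j-7}$. By \Cref{lemm:chain}, a subfamily $\Lambda_i^{g}$ comprising at least $\frac14$-proportion of $\Lambda_i$ consists of paths of $\widetilde{X}$-length at most $\nu(t)2^{-j}$; these are the \emph{good} paths. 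Any choice of good paths $\lambda_0\in\Lambda_0^g,\ldots,\lambda_{I+1}\in\Lambda_{I+1}^g$ has the following properties: consecutive paths intersect, $\lambda_0$ meets $E$, and $\lambda_{I+1}$ meets $F$. Hence their union contains a path from $E$ to $F$, so admissibility gives $\sum_i \int_{\lambda_i}\rho\,ds \geq 1$.

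From this we extract a single ``heavy'' family. If for every $i$ there existed a good path $\lambda_i$ with $\int_{\lambda_i}\rho<\frac{1}{I+2}$, summing would contradict admissibility. So for some index $i$, every path in $\Lambda_i^g$ has $\int\rho\,ds \geq \frac{1}{I+2}\geq \frac15$, recalling $I\le 3$. Fix this $i$ and let $B$ be the parameter set of $\Lambda_i^g$, so $\mathcal{L}^1(B)\ge \frac14\cdot 2^{-j-7}$. Then we use rectangularity of level $n$ families in $d_{\widetilde{X}}$ together with Cauchy--Schwarz on each path:
\[
\frac{1}{25} \le \Big(\int_{\lambda_s}\rho\,ds\Big)^2 \le \ell_{\widetilde{X}}(\lambda_s)\int_{\lambda_s}\rho^2\,ds \le \nu(t)2^{-j}\int_{\lambda_s}\rho^2\,ds .
\]
Integrating over $s\in B$ gives
\[
\int_{\widetilde{X}}\rho^2\,d\mathcal{H}^2 \ge \int_B\int_{\lambda_s}\rho^2\,ds\,ds \ge \mathcal{L}^1(B)\,\frac{2^{j}}{25\,\nu(t)} \ge \frac{1}{25\cdot 2^{9}\,\nu(t)}.
\]
This bound is independent of $j$ and $E,F$, so we take $\varphi(t)=(12800\,\nu(t))^{-1}$ and then replace it by a decreasing minorant. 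The monotonicity of $\nu$ in $t$ makes this fine. The reduction in the paper from $\triangle_X$ to $\triangle_Z$ then gives the Loewner property for $X$ with the adjusted function.

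I expect the main obstacle to be justifying the rectangularity identity for the $\varepsilon$-extended families. The paths are no longer a clean foliation: type 1 extensions turn up the side of a pillowcase, and type 2 paths at dyadic parameters have left/right choices. One must check the key step: different parameters $s$ give paths that overlap only on an $\mathcal{H}^2$-null set, or overlap with bounded multiplicity. Bounded multiplicity only costs a constant, since the inequality $\int\rho^2\chi_{\Gamma(B)} \ge c\int_B\int_{\lambda_s}\rho^2$ is all that is needed. I would verify this piecewise, separately on the flat part of $X^+$ and on each pillowcase face. On each face the extended paths are horizontal or vertical segments (or height-level loops $\theta_{A,t}$), and $d_{\widetilde{X}}$ is the weight $\chi_{A\setminus S(A)}$ times Euclidean length. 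So Fubini applies directly, and the set $S(A)$ contributes zero on both sides.

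A secondary point is the degenerate case where $E$ or $F$ meets $X^-$. There I would argue symmetrically using the flat unit square $Z^-$ with its standard foliations, or push the continuum into $X^+$ near the common boundary using \Cref{thm:qs}(5).
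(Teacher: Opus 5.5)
Your proof is correct and follows the paper's argument: a pigeonhole over the at most five chain families from \Cref{lemm:chain}, concatenating light paths into a path of $\Gamma(E,F)$, then rectangularity of the heavy family and Cauchy--Schwarz over the $\frac14$-proportion of paths with $\widetilde{X}$-length at most $\nu(t)2^{-j}$. The only difference is cosmetic: you apply Cauchy--Schwarz along each good path and integrate $\rho^2$, whereas the paper integrates $\rho$ and applies Cauchy--Schwarz on the trace of the good paths; restricting the pigeonhole to good (hence rectifiable) paths also makes your constant bookkeeping slightly cleaner than the paper's.
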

    \begin{proof}
        Let $\rho$ be an admissible function for the path family $\Gamma(E,F)$. Let $(\Lambda_i)$ be the sequence of level $j+7$ path families from \Cref{lemm:chain}. Observe that $\int_{\gamma} \rho\,ds \geq 1/5$ for every $\gamma \in \Lambda_i$ for some $i \in \{1, \ldots, 5\}$. Otherwise, each $\Lambda_i$ would contain a path $\gamma_i$ with $\int_{\gamma_i} \rho\,ds < 1/5$. We can concatenate subpaths of the $\gamma_i$ into a single path $\gamma \in \Gamma(E,F)$ satisfying $\int_{\gamma} \rho\,ds <1$, contradicting the admissibility of $\rho$. 
        
        Recall that $\Lambda_i$ is a rectangular path family, which we index so that $0 \leq t \leq b$ for $b = 2^{-j-7}$. Let $B$ be the subset of $[0,b]$ for which the paths $\gamma_t$ have $\widetilde{X}$-length at most $\nu(t) 2^{-j}$; then $\mathcal{L}^1(B) \geq b/4$. Thus we have
        \begin{align*}
            \frac{1}{5} \cdot \frac{b}{4} & \leq \int_0^{b} \int_{\gamma_t} \rho\,ds\,dt  = \int \rho\,d\mathcal{H}^2 
             \leq \left(\int \rho^2 d\mathcal{H}^2 \right)^{1/2} \left( \int \chi_B d\mathcal{H}^2 \right)^{1/2} \\
            & = \left(\int \rho^2 d\mathcal{H}^2 \right)^{1/2} \left( \int_0^b \int_{\gamma_t} 1 \right)^{1/2}
            \leq \left(\int \rho^2 d\mathcal{H}^2 \right)^{1/2} \left(b \cdot \nu(t) 2^{-j}\right)^{1/2}.
        \end{align*}
        This implies that 
        \[ \frac{b\cdot 2^{j}}{20 \nu(t) } = \frac{1}{20 \cdot 2^7 \cdot \nu(t)} \leq \int \rho^2\,d\mathcal{H}^2.\]
        Since $\rho$ is arbitrary, this verifies the lemma with $\varphi(t) = (20 \cdot 2^6 \cdot \nu(t))^{-1}$.
    \end{proof}

	\bibliographystyle{abbrv}  
	\bibliography{biblio}

\end{document}